\newtheorem{theorem}{Theorem}%  meant for continuous numbers
\newtheorem{proposition}[theorem]{Proposition}% 
\newtheorem{lemma}[theorem]{Lemma}% 
\theoremstyle{thmstylethree}%
\newtheorem{definition}{Definition}%
\newcommand{\mtc}[1] {\mathcal{#1}}
\newcommand{\pr}[1]{\ensuremath{\mathsf{#1}}}
\newcommand{\rv}[1]{\ensuremath{\bm{#1}}}
\def\R{\ensuremath{\mathbb{R}}}
\def\Ex{\ensuremath{\mathbb{E}}}
\def\Ind{\ensuremath{\mathbb{1}}}
\DeclareMathOperator*{\argmax}{argmax}
\begin{document}

\title{The L-Shaped Method for Stochastic Programs with Decision-Dependent Uncertainty}

\author[1]{Giovanni Pantuso}
\author[2]{Mike Hewitt}

\affil[1]{Department of Mathematical Sciences, University of Copenhagen, Denmark \\
  \href{mailto:gp@math.ku.dk}{gp@math.ku.dk}}
\affil[2]{Department of Information Systems and Supply Chain Management, Loyola University Chicago, USA \\
  \href{mailto:mhewitt3@luc.edu}{mhewitt3@luc.edu}  
}

\date{}

\maketitle

\begin{abstract}
In this paper we extend the well-known L-Shaped method to solve two-stage stochastic programming problems with decision-dependent uncertainty. 
The method is based on a novel, unifying, formulation and on distribution-specific optimality and feasibility cuts for both linear and integer stochastic programs. Extensive tests on three production planning problems illustrate that the method is extremely effective on large-scale instances.
\end{abstract}

\section{Introduction}\label{sec1}

A two-stage stochastic program can be concisely expressed as
\begin{subequations}\label{eq:sp}
\begin{align}
     \min_{x\in\mtc{X}} ~&c^\top x+\Ex_{\pr{P}}[Q(x,\rv{\xi})]
\end{align}
\noindent
where $\mtc{X}$ is a subset of $\R^{n_1}$ (potentially finite and countable), $c\in \R^{n_1}$ a known vector, and $\rv{\xi}:\Omega\to\R^N$ a random variable defined on a probability space $(\Omega,\Sigma,\pr{\mu})$ with distribution $\pr{P}$ induced by $\mu$. Furthermore, given $x$, $\Ex_{\pr{P}}[Q(x,\xi)]=\int_{\Xi}Q(x,\xi)\d\pr{P}(\xi)$, where $\Xi$ is the support of $\pr{P}$, and
\begin{equation}\label{eq:sp2s}
Q(x,\xi)=\min_{y\in\mtc{Y}}\{q^\top~ y|Wy=h-Tx\}.
\end{equation}
\end{subequations}
represents the cost of second stage decisions $y$ from $\mtc{Y}\subseteq\R^{n_2}$ given a first stage decision $x$ and a realization $\xi$ of random variable $\rv{\xi}$ which collects the realizations of the random data of the problem, namely $q\in \R^{n_2}$, $h\in \R^{m_2}$, $W\in \R^{m_2\times n_2}$ and $T\in \R^{m_2\times n_1}$. For given $x$, $Q(x,\rv{\xi})$ is a measurable function from $(\Omega,\Sigma,\mu)$ into the extended real line equipped with the Borel $\sigma$-algebra $\mtc{B}$. The extended real line carries the usual protocol that $Q(x,\xi)=\infty$ captures infeasibility and $Q(x,\xi)=-\infty$ captures unboundedness.

A fundamental assumption in problem \eqref{eq:sp} is that the distribution $\pr{P}$ of $\rv{\xi}$ on $\R^N$ is known and independent of $x$.
This problem has been proven remarkably effective in modeling a vast variety decision problems under uncertainty. 
Many, however, do not fall under this umbrella. 

In this paper we allow the probability distribution to depend on the decision vector $x$. 
Particularly, we consider a finite (but potentially large) collection $(\mtc{X}_d)_{d\in\mtc{D}}$ of subsets of $\mtc{X}$ that defines a partition. 
That is, $\bigcup_{d\in\mtc{D}}\mtc{X}_d=\mtc{X}$ and $\mtc{X}_{d} \cap \mtc{X}_{d'} = \emptyset$ for all $d,d' \in \mtc{D},d \neq d'$.  
We assume that the membership of $x$ to a subset $\mtc{X}_d$ determines a distribution $\pr{P}_d$ on $\R^N$ for the uncertainties $\rv{\xi}$. 
In other words, the non-trivial relationship between $x$ and the resulting distribution is piecewise-constant, i.e., different sub-regions of the feasible space induce different distributions. 
This implies a finite, but potentially very large, number of distributions. 

The choice of working with disjoint subsets of $\mtc{X}$ is driven by our conceptualization of the optimization process. Namely, that the optimizer makes decisions that in turn completely determine the relevant second stage distribution. If we allow the sets $\mtc{X}_d$ to intersect, such a determination can not be made and one is potentially left with the situation of modeling that the optimizer explicitly chooses the relevant second stage distribution. We do not believe such a process reflects the role optimization plays in real-world decision making. That said, if one is able to uniquely determine the relevant distribution for each point in a non-empty intersection, we believe our method applies with minor modifications. However, we do not address this issue in this paper.

We define $\Ind_d:\mtc{X}\to\{0,1\}$ to be the characteristic function of $\mtc{X}_d$. Namely, $\Ind_d(x)= 1$ iff  $x\in\mtc{X}_d$, $0$ otherwise. Given this notation, in this paper we focus on the following stochastic program with decision-dependent uncertainty
\begin{equation}
     \min_{x\in \mtc{X}} ~c^\top x+\sum_{d\in\mtc{D}}\Ind_d(x)\Ex_{\pr{P}_d}[Q(x,\xi)]
     \label{eq:speu}
\end{equation}
Observe that $x\mapsto\sum_{d\in\mtc{D}}\Ind_d(x)=1$. This follows from the fact that the collection $(\mtc{X}_d)_{d\in\mtc{D}}$ is a partition of $\mtc{X}$.  That is, the problem can be viewed as a type of disjunctive program \cite{balas2018disjunctive}.  Observe that in \eqref{eq:speu} infeasibility is tolerated (i.e., $Q(x,\xi)=\infty$ with probability larger than zero) under distributions other than the one enforced by $x$. We assume the problem is bounded from below. 

Problem \eqref{eq:speu} leaves sufficient freedom to model a large variety of configurations. 
For example, the subsets $\mtc{X}_d$ may be convex polyhedra specifying, e.g., admissible ranges of values on individual components of $x$.
If $\mtc{X}$ is a subset of the $n_1$-dimensional integers, subsets $\mtc{X}_d$ may represent subsets of discrete choices, specific combinations, arrangements, permutations, etc.
The resulting distributions $\pr{P}_d$ may in turn be parametrized by a function $\theta:\{0,1\}^{|\mtc{D}|}\to \R^P$ where $P$ is the number of parameters of $\pr{P}_d$, or even assume a different functional form over the different subsets $\mtc{X}_d$. Furthermore, in situations where the distribution is explicitly parameterized by the decision vector, e.g., $\theta\left(x\right)$, this formulation allows approximating the distribution using some reference set of parameters in each subset, e.g., $\theta\left(\bar{x}_d\right)$ for some $\bar{x}_d\in\mtc{X}_d$, $d\in\mtc{D}$.

 An example of an application that can be modeled as Problem \eqref{eq:speu} is a production planning problem in which an organization seeks to determine expected profit-maximizing production volumes in the presence of  uncertainty in both supply and demand. 
Uncertainty in supply is endogeneous and depends on choices regarding the combination of facilities that produce and the respective production volumes. More generally, such a problem is similar to those studied in the literature on lot sizing with random yields \cite{yano1995lot}. We will return to a similar problem in \Cref{sec:comp_setting}.

The primary contribution of this paper is an extension of the classical L-Shaped and Integer L-Shaped methods to efficiently solve problem \eqref{eq:speu}. As the relevant second stage distribution depends on first stage decisions, underlying the method is the idea of \textit{distribution-specific} feasibility and optimality cuts, which we derive and prove. The overall method is general but the form of such cuts depends on the nature of the second stage problem, $Q(x,\xi).$ In particular, we demonstrate how such cuts can be formulated depending on whether $Q(x,\xi)$ forms a linear or a mixed-integer linear program. 
Furthermore, we propose inequalities that are distribution-independent and may speed up convergence. Beyond proving theoretical convergence of the proposed method, we demonstrate computationally on three variants of a production planning problem that it significantly outperforms a state-of-the-art benchmark. 

The remainder of this article is organized as follows. We begin by summarizing the relevant literature on stochastic programs with endogenous uncertainty in \Cref{sec:literature}. We then introduce the method in \Cref{sec:benders}. In \Cref{sec:comp_setting} we introduce three production planning problems we use in the computational study presented in \Cref{sec:comp_study}. Finally, we draw conclusions in \Cref{sec:conclusion_future}.

\section{Literature and contribution}
\label{sec:literature}
According to \cite{GoeG06} there exist at least two ways in which decisions can determine the nature of uncertainties.
The first possibility is that decisions determine the time when the uncertainty is resolved. 
A typical example is provided by \cite{ApaG16}. An oil company has to decide which gas reservoirs to explore and when. Exploration is performed by installing drilling equipment. The size and quality of the reservoirs is uncertain. The uncertainty is resolved only after the sites have been explored. 
Stochastic programs of this kind are studied in a number of articles including \cite{ColM08,TarG08,TarGG09,ColM10,GupG11,MerV11,TarGG13,ApaG16}. 

The second possibility -- which is of interest in this paper -- is that decisions alter the probability space underlying the random variables, hence their distributions. 
The methodological landscape for this type of problems is sparser. 
One of the first approaches dates back to \cite{Pfl90} who consider a Markovian random process whose transition probabilities depend on the decision variables of the optimization problem. The author provides an algorithm that converges with probability one. 
Later, \cite{JonWW98} consider the case where decisions influence both the probability measure and the timing of the observation. 
Their framework includes both two- and multi-stage problems. Nevertheless, the decisions that have an impact on the uncertainty are entirely made at the first decision stage. As in the case described in this article, the authors assume that the set of probability measures which can be enforced by decisions is finite and countable. The authors show that the problem can be recast as choosing the best among the stochastic programs determined by a choice of a distribution and propose an implicit enumeration algorithm.
In a similar framework, \cite{Pan21} considers multi-stage problems and extends the model of \cite{JonWW98} by allowing decisions at all stages to determine the probability measure for the later stages. The author provides a new scenario tree structure and mathematical formulation that avoids explicit statement of non-anticipativity constraints which are typically linked to model size growth, see e.g., \cite{ApaG16,HooM16,MooM18}. 
Furthermore, the author provides an exact solution method for a special case.
Considering two-stage stochastic programs, \cite{Hel16} and \cite{HelBT18} discuss several ways of modeling the interplay between the decision variables and the parameters of the underlying probability distributions. Particularly, the authors formulate two-stage models where prior probabilities are distorted through affine transformations, or combined using convex combinations of several probability distributions.
Furthermore, the authors present models which incorporate the parameters of the probability distribution as first stage decision variables.  Practical applications of problems with endogenous uncertainty can be found in \cite{Ahm00,VisSF04,Fla10,PeeSGV10,LauPK14,HelW05,TonFR12,EscGMU18,hewittpantusoJOC,lejeune2024profit}. We do not discuss these papers in detail for sake of brevity.  

In this article we focus on stochastic programs of the second kind. 
The contribution of this article is twofold. First, we propose a general-purpose model for two-stage stochastic programs with decision-dependent uncertainty. Particularly, we are interested in the case where there exist finitely many distributions that can be enforced by the choice of $x$. 
Second, we provide a general purpose exact algorithm which extends the well-known L-Shaped method. Particularly, optimality and feasibility cuts are derived for problems which may include integer variables at both stages. 

This article extends the available literature in several ways.
In \cite{Hel16,HelBT18} given reference discrete distributions are transformed by first stage decisions through affine transformations.
Hence, their approach can handle all the (infinitely many) discrete distributions that may materialize as an affine transformation of the given reference distributions.
We use a different approach. On the one hand, we restrict ourselves to finitely (though potentially combinatorially) many distributions.
On the other hand, each one of these distributions is arbitrarily dependent on $x$.
In particular, our model allows for arbitrary mappings of (subsets of) first stage decisions to probability distributions.
When the number of probability distributions is naturally finite, our model is exact. This is the case, in particular, when the underlying distribution depends only on specific finite features of the first stage solutions, $x$. When $\mathcal{X}$ is countable and bounded, our model is always exact.

The problem we study is in line with the work of \cite{JonWW98} and \cite{Pan21} in that the set of potential probability distributions enforced by first stage decisions is finite and countable. In particular, in \cite{JonWW98} the authors propose an implicit enumeration algorithm which relies on computing and storing bounds for each choice of a probability distribution. As the authors acknowledge, this solution strategy is viable only when the set of probability distributions is small. 
As will be more evident in \Cref{sec:model}, this may in general not be the case in large scale problems. To overcome these methodological limitations, we extend the L-Shaped method by deriving novel distribution-specific optimality and feasibility cuts. The method is generally applicable to stochastic programs, possibly with integer variables at both stages.

\section{The L-shaped method for problems with endogenous uncertainty}
\label{sec:benders}
In this section, we adapt the L-Shaped method to provide an exact algorithm for problem \eqref{eq:speu}. As part of that adaptation we derive new distribution-specific optimality and feasibility cuts.  We note that the operations of the proposed method are based on the assumption that problem \eqref{eq:speu} is feasible and bounded from below. We treat, separately, two important cases. The first case, discussed in \Cref{sec:benders:cont}, is when $\mtc{Y}\subseteq \R^{n_2}$. In the second, discussed in \Cref{sec:benders:int},  $\mathcal{Y}\subseteq\mathbb{Z}^{n_2^1}\times \R^{n_2^2}, n_2^1+n_2^2=n_2$.  For each case we show there are familes of functions, $\mtc{F} = \{\mtc{F}_{1},\ldots,\mtc{F}_{\vert \mtc{D} \vert} \}, \mtc{O} = \{\mtc{O}_{1},\ldots,\mtc{O}_{\vert \mtc{D} \vert} \},$ that induce constraints such that the following is an equivalent reformulation of \eqref{eq:speu}.
\begin{subequations}\label{eq:speu_reform}
\begin{align}
     \min_{x\in \mtc{X}} & ~c^\top x+\mu \\
     x \in \mtc{X}_{d}  \implies &
     \begin{cases}
     	f_{d}(x) \leq 0 \;\;\; \forall f_{d}(\cdot) \in \mtc{F}_{d} \\
	o_{d}(x) \leq \mu \;\; \forall o_{d}(\cdot) \in \mtc{O}_{d}
     \end{cases} \forall d\in\mtc{D}
\end{align}
\end{subequations}
Namely, given an optimal solution $(x^*,\mu^*)$ to \eqref{eq:speu_reform}, $x^*$ is optimal for \eqref{eq:speu} and $\mu^* = \sum_{d\in\mtc{D}}\Ind_d(x^*)\Ex_{\pr{P}_d}[Q(x^*,\xi)].$ We refer to the set $\mtc{F}_{d}$ as a set of distribution-specific (to $d$) feasibility cuts and show that $\mtc{F}_{d}$ is finite. We show the same for the set of distribution-specific optimality cuts, $\mtc{O}_{d}.$ 

Similar to the classical L-Shaped method, we consider subsets $\underline{\mtc{F}_{d}} \subseteq \mtc{F}_{d}, \underline{\mtc{O}_{d}} \subseteq \mtc{O}_{d}, d \in \mtc{D}$ of each set of functions and formulate the following \textit{Relaxed Master Problem} (RMP).
\begin{subequations}\label{eq:rmp}
\begin{align}
     \min_{x\in \mtc{X}} & ~c^\top x+\mu \\
     \label{eq:rmp:imp}x \in \mtc{X}_{d}  \implies &
     \begin{cases}
     	f_{d}(x) \leq 0 \;\;\; \forall f_{d}(\cdot) \in \underline{\mtc{F}_{d}}, \\
	o_{d}(x) \leq \mu \;\; \forall o_{d}(\cdot) \in \underline{\mtc{O}_{d}}.
     \end{cases}
\end{align}
\end{subequations}

A high-level overview of the steps the method executes at an iteration is presented in Algorithm \ref{alg:ls}. 
At iteration $i$, the method solves problem \eqref{eq:rmp} formulated with sets $\underline{\mtc{F}_{d}^{i}},  \underline{\mtc{O}_{d}^{i}}$, to obtain the solution $(x^i,\mu^i)$.
We note that, at the initial iterations, without a valid lower bound for $\mu$, it may be necessary to exclude $\mu$ from optimization and set its value to $\infty$ to prevent unboundedness. 
The method then identifies the distribution $d^i$ enforced by $x^i$. 
Next, the method checks whether $x^i$ yields second stage feasible subproblems under distribution $d^i$.
If it does not, the method identifies a feasibility cut from $\mtc{F}_{d^i} \setminus \underline{\mtc{F}_{d^i}^i}$ and adds it to $\underline{\mtc{F}_{d^i}^{i+1}}$.

If the feasibility check is passed, the method moves on to assess the accuracy of $\mu^i,$ the estimate of expected second stage costs.
To do so, it computes the expected second stage cost $\Ex_{\pr{P}_{d^i}}[Q(x^i,\xi)]$ under the distribution $d^i$ enforced by $x^i$. 
If the approximation $\mu^i$ is lower than this expected value it identifies an optimality cut from  $\mtc{O}_{d^i} \setminus \underline{\mtc{O}_{d^i}^i}$ and adds it to $\underline{\mtc{O}_{d^i}^{i+1}}$.
If the optimality check is passed, the solution $x^i$ is provably optimal and the method terminates. 

\begin{algorithm}[htp]
\caption{L-Shaped Method for \eqref{eq:speu}}\label{alg:ls}
\begin{algorithmic}
\State $i\gets 0$, \texttt{solved}$\gets$\texttt{false}
\While{not \texttt{solved}}
\State Solve \eqref{eq:rmp}. Let $(x^i,\mu^i)$ be its optimal solution
\State Identify $d^i\in\mtc{D}$ such that $x^i\in\mtc{X}_{d^i}$
\If{$x^i$ is second stage infeasible}
    \State Add a feasibility cut from $\mtc{F}_{d^i} \setminus \underline{\mtc{F}_{d^i}^{i}}$ to $\underline{\mtc{F}_{d^i}^{i+1}}$
\ElsIf{$\mu^i<\Ex_{\pr{P}_{d^i}}[Q(x^i,\xi)]$ }
    \State Add an optimality cut from $\mtc{O}_{d^i} \setminus \underline{\mtc{O}_{d^i}^i}$ to $\underline{\mtc{O}_{d^i}^{i+1}}$
\Else   
    \State \texttt{solved}$\gets$\texttt{true}
\EndIf
\State $i\gets i+1$
\EndWhile
\end{algorithmic}
\end{algorithm}
We note that the method described in Algorithm \ref{alg:ls} is amenable to well-known enhancements of the classical L-Shaped method.
For example, as \eqref{eq:rmp} is defined over subsets of $\mtc{F}$ and $\mtc{O}$ it is a relaxation of \eqref{eq:speu_reform} (and hence \eqref{eq:speu}). Hence, its objective value provides a lower bound on the optimal objective value to \eqref{eq:speu}. 
Also, we note that when $x^i$ passes the feasibility check, $c^\top x^i + \Ex_{\pr{P}_{d^i}}[Q(x^i,\xi)]$ is an upper bound on the optimal objective value to \eqref{eq:speu}. 
The method may then terminate when the gap between the two bounds falls below a pre-defined threshold.  
Furthermore, the implications \eqref{eq:rmp:imp} imply that \eqref{eq:rmp} is a MILP. Therefore, Algorithm \ref{alg:ls} can be invoked at nodes in a branch-and-bound procedure that solves \eqref{eq:rmp}, see \cite{LapL93}.

The following sections present appropriate familes of functions $\mtc{F}, \mtc{O}$ for different classes of second stage problems.
For the sake of generality and ease of exposition, the constraints based on those functions are expressed with indicator functions, $\Ind_d(x)$. 
Note, however, that a direct implementation of such indicator functions may require a large number of binary variables (at least $|\mtc{D}|$) in \eqref{eq:rmp}. At the end of the section we will show how, under rather general conditions, the implications $x\in\mtc{X}_d\implies f_{d}(x)\leq 0$ and $x\in\mtc{X}_d\implies o_{d}(x) \leq \mu$ can be modeled using fewer binary variables.

\subsection{Continuous second stage}\label{sec:benders:cont}
In this section, we consider the case in which the second stage problem can take the form of a linear program. 
Namely

\begin{equation}\label{eq:sp2s:continuous}
Q(x,\xi)=\min_{y\geq 0}\{q^\top y|Wy=h-Tx\}.
\end{equation}

The validity of the proposed optimality and feasibility cuts require the following assumption.

\begin{itemize}
    \item[\textbf{A1}] For each $d\in\mtc{D}$, the uncertainty $\rv{\xi}$ has discrete distribution $\pr{P}_d=\sum_{s\in\mtc{S}_d}\pi_{sd}\Delta_s$, supported by $\Xi_d$ $=$ $\{$$\xi_{1d}$,$\ldots$,$\xi_{Sd}\}$ where $\Delta_s$ is the Dirac measure that assigns total mass to realization $\xi_{sd}$ for $s$ in the set of scenarios $\mtc{S}_d$  and $\pi_{sd}$ is the probability of realization $\xi_{sd}$ under distribution $d\in\mtc{D}$. 
\end{itemize}

When $\rv{\xi}$ has a continuous distribution, assumption A1 can be satisfied by means of a discretization technique. In what follows, we begin by introducing distribution-specific feasibility cuts for problems without complete recourse. We then continue by introducing distribution-specific optimality cuts.

\subsubsection{Distribution-specific feasibility cuts}
\label{subsubsec:cont_sp:feas_cut}
When complete recourse is not present, the method must be augmented with a technique for generating feasibility cuts to ensure convergence. We next define, for all $d\in\mtc{D}$ and $s\in\mtc{S}_d$ the set 
$$\mtc{K}_{sd}=\big\{x\in\R^{n_1}|\exists y\geq 0: W_{sd}y=h_{sd}-T_{sd}x \big\}.$$
In words, $\mtc{K}_{sd}$ contains values of $x$ such that the corresponding subproblem for scenario $s$ of distribution $d$ is feasible. Relatedly, we define for all $d\in\mtc{D}$, $\mtc{K}_{d}=\bigcap_{s\in\mtc{S}_d}\mtc{K}_{sd}$.
This allows us to define relatively complete recourse more precisely as $\mtc{X}\subseteq \bigcap_{d\in\mtc{D}}\mtc{K}_d$ and complete recourse as $\mtc{K}_d=\R^{n_1}$ for all $d\in\mtc{D}$.
Observe, that we rule out the possibility that $\mtc{K}_d=\emptyset$ given that we assume problem \eqref{eq:speu} is feasible.

To define feasibility cuts, we make, in addition to A1, the following assumption.
\begin{itemize}
    \item[\textbf{A2}] For all $d\in\mtc{D}$ and $s\in\mtc{S}_d$, there exists a constant $U_{ds}<\infty$ such that
    $$U_{ds}\geq \max_{x\in\mtc{X}}\min_{y\in\R^{n_2}_+,w^+,w^-\in\R^{m_2}_+}\{\mathbf{1}^\top w^++\mathbf{1}w^-:W_{sd^v}y+w^+-w^-=h_{sd^v}-T_{sd^v}x\}$$
    where $\mathbf{1}^\top=(1,\ldots,1)\in\R^{m_2}$.
\end{itemize}
The quantities $U_{ds}$ enable the definition of feasibility cuts that are distribution-specific. Namely, they are binding only for first stage solutions that induce the same distribution for which they were generated and redundant otherwise.

\begin{theorem}\label{prop:fc:continuous}
Let $(x^v,\mu^v)$ be a solution to \eqref{eq:rmp} that induces distribution $d^v$ such that $x^v\notin\mtc{K}_{sd^v}$ for some $s\in\mtc{S}_{d^v}$.
Then, solution $x^v$ violates inequality 
\begin{equation}\label{eq:continuous:fc}
    (\rho^v)^\top(h_{sd^v}-T_{sd^v}x)\leq U_{d^vs}(1-\Ind_{d^v}(x^v))
\end{equation}
where $\rho^v$ is an optimal solution to the the dual of 
\begin{equation}\label{eq:continuous:fsp}
\Psi(x^v,\xi_{sd^v}):=\min_{y\in\R^{n_2}_+,w^+,w^-\in\R^{m_2}_+}\{\mathbf{1}^\top w^++\mathbf{1}w^-:W_{sd^v}y+w^+-w^-=h_{sd^v}-T_{sd^v}x^v\}
\end{equation}
\end{theorem}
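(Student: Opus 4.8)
The plan is to treat the feasibility subproblem $\Psi(x^v,\xi_{sd^v})$ as a phase-I linear program measuring total infeasibility, and to read the cut \eqref{eq:continuous:fc} off its LP dual via strong duality. First I would observe that the primal \eqref{eq:continuous:fsp} is feasible for every $x^v$ (given any $y\geq 0$, the residual $h_{sd^v}-T_{sd^v}x^v-W_{sd^v}y$ can be absorbed into the nonnegative slacks $w^+,w^-$) and that its objective is bounded below by $0$; hence an optimal primal solution, and by strong duality an optimal dual solution $\rho^v$, exist. The key structural fact is that $\Psi(x^v,\xi_{sd^v})=0$ exactly when $x^v\in\mtc{K}_{sd^v}$ (take $w^+=w^-=0$), so the hypothesis $x^v\notin\mtc{K}_{sd^v}$ forces $\Psi(x^v,\xi_{sd^v})>0$.

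Next I would write the dual of \eqref{eq:continuous:fsp} explicitly. With a free multiplier $\rho$ on the equality constraint, dual feasibility is $W_{sd^v}^\top\rho\leq 0$ together with $-\mathbf{1}\leq\rho\leq\mathbf{1}$, and the dual objective is $\rho^\top(h_{sd^v}-T_{sd^v}x^v)$. Strong duality then gives
\begin{equation*}
(\rho^v)^\top(h_{sd^v}-T_{sd^v}x^v)=\Psi(x^v,\xi_{sd^v})>0.
\end{equation*}
It remains to evaluate the right-hand side of \eqref{eq:continuous:fc} at $x^v$. Since $x^v$ induces $d^v$, we have $x^v\in\mtc{X}_{d^v}$ and therefore $\Ind_{d^v}(x^v)=1$, so $U_{d^vs}(1-\Ind_{d^v}(x^v))=0$. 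Thus at $x=x^v$ the inequality \eqref{eq:continuous:fc} asserts $(\rho^v)^\top(h_{sd^v}-T_{sd^v}x^v)\leq 0$, which directly contradicts the strict positivity just established; hence $x^v$ violates \eqref{eq:continuous:fc}.

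The substance of this argument is routine LP duality, so I do not anticipate a genuine obstacle in the violation claim itself; the delicate design point — which I would flag even though the stated theorem does not require it — is why \eqref{eq:continuous:fc} is simultaneously \emph{valid}. The dual feasible region $\{W_{sd^v}^\top\rho\leq 0,\,-\mathbf{1}\leq\rho\leq\mathbf{1}\}$ does not depend on $x$, so $\rho^v$ remains dual-feasible at every $x$, giving $(\rho^v)^\top(h_{sd^v}-T_{sd^v}x)\leq\Psi(x,\xi_{sd^v})$. When $x\in\mtc{X}_{d^v}$ this bounds the left-hand side by the (zero) infeasibility of feasible points, whereas when $x\notin\mtc{X}_{d^v}$ the indicator switches off and assumption \textbf{A2} supplies the uniform bound $\Psi(x,\xi_{sd^v})\leq U_{d^vs}$, rendering the cut redundant. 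This is precisely the mechanism that makes the cut distribution-specific, and verifying that $U_{d^vs}$ dominates the worst-case infeasibility over all $x\in\mtc{X}$ is the only place where real care is needed.
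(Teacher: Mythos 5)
Your proof is correct and follows essentially the same route as the paper's: both establish $\Psi(x^v,\xi_{sd^v})>0$ from $x^v\notin\mtc{K}_{sd^v}$, invoke strong LP duality to write this value as $(\rho^v)^\top(h_{sd^v}-T_{sd^v}x^v)$, and conclude violation since $\Ind_{d^v}(x^v)=1$ makes the right-hand side zero. Your added remarks on the explicit dual feasible region and on why the cut remains valid for other solutions are sound but not needed here; the latter is exactly the content of the paper's subsequent safety theorem (\Cref{prop:fc:continuous:safe}).
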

\begin{proof}
We note that $\Psi(x^v,\xi_{sd^v})$  is a feasible problem and that $x^v\notin\mtc{K}_{sd^v}$ implies that $\Psi(x^v,\xi_{sd^v})  >0.$
Letting $\rho^v$ be an optimal solution to the dual of $\Psi(x^v,\xi_{sd^v}),$ we have that $\Psi(x^v,\xi_{sd^v})= (\rho^v)^\top(h_{sd^v}-T_{sd^v}x^v)>0.$
In turn, given that $\Ind_{d^v}(x^v)=1$ we have
$$\Psi(x^v,\xi_{sd^v})= (\rho^v)^\top(h_{sd^v}-T_{sd^v}x^v)> 0 = U_{d^vs}(1-\Ind_{d^v}(x^v))$$
as required.
\end{proof}

The following theorem shows that the proposed feasibility cuts are safe, in the sense that a cut generated for a solution $x^v$ to \eqref{eq:rmp} is satisfied by solutions $(x^l,\mu^l$) to \eqref{eq:rmp}, for which $x^l \neq x^v$, and the subproblems are feasible in the distribution enforced by $x^l$. 
\begin{theorem}\label{prop:fc:continuous:safe}
Let $(x^l,\mu^l)$ be a solution to \eqref{eq:rmp} such that $x^l\in\mtc{K}_{d^l}$.
Then, solution $x^l$ satisfies inequality
\begin{equation*}
    (\rho^v)^\top(h_{sd^v}-T_{sd^v}x)\leq U_{d^vs}(1-\Ind_{d^v}(x))
\end{equation*}
where $\rho^v$ is an optimal solution to the dual of $\Psi(x^v,\xi_{sd^v})$ for some solution $x^v$ to \eqref{eq:rmp} and $s\in\mtc{S}_{d^v}$.
\end{theorem}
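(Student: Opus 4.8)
The plan is to exploit two structural facts about the feasibility subproblem $\Psi(x,\xi_{sd^v})$: first, that it is always feasible (one may set $y=0$ and absorb $h_{sd^v}-T_{sd^v}x$ into the nonnegative slacks $w^+,w^-$), and second — this is the crux — that its \emph{dual} feasible region does not depend on $x$. Indeed, writing the primal in standard form with decision vector $(y,w^+,w^-)$, cost vector $(0,\mathbf 1,\mathbf 1)$ and constraint matrix $[\,W_{sd^v}\ \ I\ \ -I\,]$, the dual reads
\begin{equation*}
\max_{\rho}\ \rho^\top(h_{sd^v}-T_{sd^v}x)\quad\text{s.t.}\quad W_{sd^v}^\top\rho\le 0,\ -\mathbf 1\le\rho\le\mathbf 1,
\end{equation*}
whose feasible set is governed only by $W_{sd^v}$. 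Hence the optimal dual vector $\rho^v$ computed at $x^v$ remains dual-feasible for $\Psi(x^l,\xi_{sd^v})$, and weak LP duality yields the central inequality
\begin{equation*}
(\rho^v)^\top(h_{sd^v}-T_{sd^v}x^l)\ \le\ \Psi(x^l,\xi_{sd^v}).
\end{equation*}

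With this in hand I would distinguish two cases according to the distribution $d^l$ enforced by $x^l$. If $d^l\neq d^v$, then by disjointness of the partition $(\mtc{X}_d)_{d\in\mtc{D}}$ we have $x^l\notin\mtc{X}_{d^v}$, so $\Ind_{d^v}(x^l)=0$ and the right-hand side of the cut equals $U_{d^vs}$. Since $x^l\in\mtc{X}$, assumption \textbf{A2} bounds $\Psi(x^l,\xi_{sd^v})\le\max_{x\in\mtc{X}}\Psi(x,\xi_{sd^v})\le U_{d^vs}$; chaining this with the central inequality gives exactly $(\rho^v)^\top(h_{sd^v}-T_{sd^v}x^l)\le U_{d^vs}=U_{d^vs}(1-\Ind_{d^v}(x^l))$.

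In the remaining case $d^l=d^v$, we have $\Ind_{d^v}(x^l)=1$, so the right-hand side of the cut is $0$ and we must show $(\rho^v)^\top(h_{sd^v}-T_{sd^v}x^l)\le 0$. Here the hypothesis $x^l\in\mtc{K}_{d^l}=\mtc{K}_{d^v}=\bigcap_{s\in\mtc{S}_{d^v}}\mtc{K}_{sd^v}$ is what does the work: it guarantees $x^l\in\mtc{K}_{sd^v}$, i.e.\ there exists $y\ge 0$ with $W_{sd^v}y=h_{sd^v}-T_{sd^v}x^l$. Taking this $y$ together with $w^+=w^-=0$ is primal-feasible with objective value $0$, and since the $\Psi$-objective is nonnegative this forces $\Psi(x^l,\xi_{sd^v})=0$. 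The central inequality then delivers $(\rho^v)^\top(h_{sd^v}-T_{sd^v}x^l)\le 0$, as required.

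The only delicate point — and the step I would be most careful to justify — is the $x$-independence of the dual feasible region, since this is precisely what licenses reusing $\rho^v$ (generated at $x^v$) to bound the quantity evaluated at the different point $x^l$; everything else reduces to weak duality combined with either \textbf{A2} or the feasibility hypothesis. It is also worth remarking that the hypothesis $x^l\in\mtc{K}_{d^l}$ is genuinely used only when $d^l=d^v$: when $d^l\neq d^v$ the cut is rendered slack purely by the constant $U_{d^vs}$, which is exactly the mechanism that makes these cuts \emph{distribution-specific}.
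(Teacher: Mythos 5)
Your proof is correct and takes essentially the same approach as the paper's: the same case split on $d^l \neq d^v$ versus $d^l = d^v$, the same key observation that the dual feasible region of $\Psi(\cdot,\xi_{sd^v})$ is independent of $x$, and the same use of assumption \textbf{A2} in the first case and of the hypothesis $x^l\in\mtc{K}_{d^l}$ in the second. The only (cosmetic) difference is that where the paper invokes strong duality — rewriting the maximum over the dual region as the primal minimum, and in the second case comparing $\rho^v$ against an optimal dual solution $\rho^l$ — you get by with weak duality alone, a slight streamlining of the same argument.
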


\begin{proof}
We consider separately the cases in which $d^l\neq d^v$ and in which $d^l=d^v$.

When $d^l\neq d^v$, we have $\Ind_{d^v}(x^l)=0$ and inequality \eqref{eq:continuous:fc} reduces to 
$$(\rho^v)^\top(h_{sd^v}-T_{sd^v}x)\leq U_{d^vs}$$
Consider the set $\mtc{R}_{sd}:=\left\{\rho\in\R^{m_2}\left\vert \rho^\top W_{sd}\leq 0,-\mathbf{1}\leq\rho\leq \mathbf{1}\right.\right\}$
so that $\rho^v\in \argmax_{\rho\in\mtc{R}_{sd^v}} \{\rho^\top(h_{sd^v}-T_{sd^v}x^v)\}$. 
Then we have
\begin{align*}
  (\rho^v)^\top(h_{sd^v}-T_{sd^v}x^l)\leq& \max_{\rho\in\mtc{R}_{sd^v}}\rho^\top(h_{sd^v}-T_{sd^v}x^l)\\
  \leq &\max_{x\in\mtc{X}}\max_{\rho\in\mtc{R}_{sd^v}}\rho^\top(h_{sd^v}-T_{sd^v}x)\\
  =&\max_{x\in\mtc{X}}\min_{y\in\R^{n_2}_+,w^+,w^-\in\R^{m_2}_+}\{\mathbf{1}^\top w^++\mathbf{1}w^-:W_{sd^v}y+w^+-w^-=h_{sd^v}-T_{sd^v}x\}\\
  \leq & U_{d^vs}
\end{align*}
Hence, the inequality is satisfied.

When $d^l=d^v$, as $x^l\in\mtc{K}_{d^l}$ we have $x^l\in\mtc{K}_{sd^l}=\mtc{K}_{sd^v}$.
Consequently, 
$$\Psi(x^l,\xi_{sd^v})=\min_{y\in\R^{n_2}_+,w^+,w^-\in\R^{m_2}_+}\{\mathbf{1}^\top w^++\mathbf{1}w^-:W_{sd^v}y+w^+-w^-=h_{sd^v}-T_{sd^v}x^l\}=0$$
By strong duality of $\Psi(x^l,\xi_{sd^v})$ we can write
$$0=\Psi(x^l,\xi_{sd^v})=(\rho^l)^\top(h_{sd^v}-T_{sd^v}x^l)$$
where $\rho^l$ is an optimal solution to the dual of $\Psi(x^l,\xi_{sd^v})$.
As $\rho^l$ is optimal for the dual while $\rho^v$ is feasible we have, as required, that
$$U_{d^vs}(1-\Ind_{d^v}(x^l))=0=\Psi(x^l,\xi_{sd^v})=(\rho^l)^\top(h_{sd^v}-T_{sd^v}x^l)\geq (\rho^v)^\top(h_{sd^v}-T_{sd^v}x^l)$$

\end{proof}

The following proposition shows that the method will deliver second stage feasible solutions within a finite number of iterations. 
\begin{lemma}\label{prop:fc:continuous:finite}
 There exist only finitely many cuts \eqref{eq:continuous:fc}.
\end{lemma}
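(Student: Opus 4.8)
The plan is to show that each feasibility cut \eqref{eq:continuous:fc} is completely determined by the data of a single scenario-distribution pair $(s,d^v)$ together with the dual vector $\rho^v$, and that there are only finitely many distinct such dual vectors available. The key observation is that $\rho^v$ is not an arbitrary dual solution: it is an \emph{optimal} solution to the dual of the bounded feasibility subproblem $\Psi(x^v,\xi_{sd^v})$, and that dual has a fixed feasible region that does not depend on $x^v$.

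First I would write out the dual of \eqref{eq:continuous:fsp} explicitly. The primal minimizes $\mathbf{1}^\top w^+ + \mathbf{1}^\top w^-$ over $y\geq 0, w^+,w^-\geq 0$ subject to $W_{sd^v}y + w^+ - w^- = h_{sd^v}-T_{sd^v}x^v$. Its dual has the form $\max_\rho \{\rho^\top(h_{sd^v}-T_{sd^v}x^v) : \rho^\top W_{sd^v}\leq 0,\ -\mathbf{1}\leq \rho\leq \mathbf{1}\}$, whose feasible region is precisely the polyhedron $\mtc{R}_{sd^v}$ already introduced in the proof of \Cref{prop:fc:continuous:safe}. The crucial point is that $\mtc{R}_{sd^v}$ depends only on $W_{sd^v}$, hence only on the pair $(s,d^v)$, and is a \emph{bounded} polyhedron (it is contained in the box $[-\mathbf{1},\mathbf{1}]$). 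I would invoke the standard fact that a bounded polyhedron has finitely many extreme points, and that the maximum of a linear objective over it is always attained at an extreme point, so without loss of generality every cut-generating $\rho^v$ may be taken to be one of the finitely many vertices of $\mtc{R}_{sd^v}$.

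The counting argument then assembles as follows. A cut \eqref{eq:continuous:fc} is fully specified by the triple consisting of the distribution index $d^v\in\mtc{D}$, the scenario index $s\in\mtc{S}_{d^v}$, and the extreme point $\rho^v$ of $\mtc{R}_{sd^v}$ used to generate it (the constant $U_{d^vs}$ and the affine map $x\mapsto h_{sd^v}-T_{sd^v}x$ are determined by $(s,d^v)$). Since $\mtc{D}$ is finite by hypothesis, each scenario set $\mtc{S}_d$ is finite by assumption A1, and each polyhedron $\mtc{R}_{sd}$ has finitely many vertices, the total number of distinct cuts is bounded by $\sum_{d\in\mtc{D}}\sum_{s\in\mtc{S}_d}|\mathrm{vert}(\mtc{R}_{sd})|<\infty$, which is finite.

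The main obstacle I anticipate is the reduction to extreme points: the theorem as stated lets $\rho^v$ be \emph{any} optimal dual solution, and in principle different $x^v$ could yield optimal faces rather than vertices, producing a priori a continuum of $\rho^v$. The resolution is to argue that the algorithm need only ever employ a vertex optimizer (as produced by the simplex method, or by the standard argument that the optimum of a bounded LP is attained at a vertex), so that the relevant family of cuts is the finite one indexed by vertices; I would make explicit that restricting cut generation to vertex duals loses no validity, since \Cref{prop:fc:continuous} only requires $\rho^v$ to be dual-optimal and every vertex optimizer qualifies.
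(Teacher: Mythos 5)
Your proof is correct and follows essentially the same route as the paper's: the paper likewise counts cuts by the finitely many distributions in $\mtc{D}$, the finitely many scenarios in each $\mtc{S}_d$ (assumption A1), and the finitely many extreme points of the dual feasible region of $\Psi(x,\xi_{sd})$. Your explicit handling of the subtlety that an arbitrary dual-optimal $\rho^v$ need not be a vertex—and that one may restrict to vertex optimizers without losing validity—is a point the paper's one-line proof leaves implicit, but it does not change the underlying argument.
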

\begin{proof}
This follows immediately from the finite number of distributions $\mtc{D}$, finite number of realizations $\mtc{S}_d$ for all $d\in\mtc{D},$ and finite number of extreme points $\rho$ in the feasible region of the dual to $\Psi(x,\xi_{sd})$ for all $s\in\mtc{S}_d$ and $d\in\mtc{D}$.
\end{proof}

\subsubsection{Distribution-specific optimality cut}
\label{subsubsec:cont_sp:opt_cut}
The following proposition introduces a distribution-specific, duality-based, optimality cut. The validity of this cut relies on the existence of a constant $U$ that satisfies the following inequalities. 

\begin{itemize}
    \item[\textbf{A3}] There exists a constant $U$ such that  $\infty>U\geq \max_{d\in\mtc{D}}U_d-\min_{d\in\mtc{D}}L_d$ where
    $$U_d\geq \max_{x\in\mtc{X}_{d}}\sum_{s\in\mtc{S}_{d}}\pi_{sd}Q(x,\xi_{sd})$$
    $$L_d\leq \min_{x\in\mtc{X}_{d}}\sum_{s\in\mtc{S}_{d}}\pi_{sd}Q(x,\xi_{sd})$$

\end{itemize}

We note that such a $U$ exists when the second stage problem is bounded above and below, when feasible.

\begin{theorem}\label{prop:oc:continuous}
Let $(x^v,\mu^v)$ be a solution to \eqref{eq:rmp} and
$d^v\in\mtc{D}$ such that $x^v\in\mtc{X}_{d^v}$. Assume $x^v\in\mtc{K}_{d^v}$ and that $\mu^v < Q_{d^v}(x^v)$ with  
$$Q_{d^v}(x^v)=\sum_{s\in\mtc{S}_{d^v}}\pi_{sd}Q(x^v,\xi_{sd^v})$$ 
Then, solution $(x^v,\mu^v)$ violates inequality
\begin{equation}\label{eq:continuous:oc}
    \mu\geq \sum_{s\in\mtc{S}_{d^v}}\pi_{sd^v}(\rho^v_s)^\top (h_{sd^v}-T_{sd^v}x) - U(1-\Ind_{d^v}(x))
\end{equation}
where $\rho^v_s$ is an optimal solution to the dual of $Q(x^v,\xi_{sd^v})$.
\end{theorem}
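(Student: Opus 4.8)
The plan is to show directly that the point $(x,\mu)=(x^v,\mu^v)$ falsifies \eqref{eq:continuous:oc}, mirroring the substitution argument used for the feasibility cut in \Cref{prop:fc:continuous}. First I would exploit that $x^v\in\mtc{X}_{d^v}$ forces $\Ind_{d^v}(x^v)=1$, so the penalty term $U(1-\Ind_{d^v}(x^v))$ vanishes and the right-hand side of \eqref{eq:continuous:oc} collapses to $\sum_{s\in\mtc{S}_{d^v}}\pi_{sd^v}(\rho^v_s)^\top(h_{sd^v}-T_{sd^v}x^v)$. This is the step that makes the cut ``distribution-specific'': only solutions inducing $d^v$ feel the full inequality.

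The key step is to identify each summand as the optimal value of the corresponding scenario subproblem. Because $x^v\in\mtc{K}_{d^v}=\bigcap_{s\in\mtc{S}_{d^v}}\mtc{K}_{sd^v}$, every subproblem defining $Q(x^v,\xi_{sd^v})$ is primal-feasible, and by the standing assumption that the second stage is bounded below when feasible its optimal value is finite. Linear programming strong duality then yields $(\rho^v_s)^\top(h_{sd^v}-T_{sd^v}x^v)=Q(x^v,\xi_{sd^v})$ for each $s\in\mtc{S}_{d^v}$, where $\rho^v_s$ is the chosen optimal dual solution. Taking the $\pi_{sd^v}$-weighted sum over $s$ then gives exactly $\sum_{s\in\mtc{S}_{d^v}}\pi_{sd^v}Q(x^v,\xi_{sd^v})=Q_{d^v}(x^v)$.

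Combining the two observations, the right-hand side of \eqref{eq:continuous:oc} evaluated at $x^v$ equals $Q_{d^v}(x^v)$, so the inequality asserts $\mu^v\geq Q_{d^v}(x^v)$. This directly contradicts the hypothesis $\mu^v<Q_{d^v}(x^v)$, which establishes that $(x^v,\mu^v)$ violates the cut, as required.

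The main obstacle I anticipate is purely the careful justification of strong duality rather than any computation: one must verify that $x^v\in\mtc{K}_{d^v}$ guarantees primal feasibility in every scenario $s\in\mtc{S}_{d^v}$ (this is precisely the definition of $\mtc{K}_{d^v}$) and that the boundedness underlying A3 rules out an unbounded dual, so that an optimal $\rho^v_s$ genuinely exists and the primal and dual optima coincide. Once this is in place, the remainder is a one-line substitution exploiting $\Ind_{d^v}(x^v)=1$.
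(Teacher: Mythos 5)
Your proposal is correct and follows essentially the same argument as the paper's proof: both use $\Ind_{d^v}(x^v)=1$ to eliminate the penalty term, invoke strong duality of each scenario subproblem (valid since $x^v\in\mtc{K}_{d^v}$ guarantees primal feasibility) to identify $(\rho^v_s)^\top(h_{sd^v}-T_{sd^v}x^v)$ with $Q(x^v,\xi_{sd^v})$, and conclude that the cut evaluated at $(x^v,\mu^v)$ reads $\mu^v\geq Q_{d^v}(x^v)$, contradicting the hypothesis. The only difference is presentational (you evaluate the right-hand side first, the paper starts from the inequality $\mu^v<Q_{d^v}(x^v)$ and rewrites), plus your slightly more explicit remark on why an optimal dual solution exists, which the paper leaves implicit.
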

\begin{proof}
Since $\mu^v < Q_{d^v}(x^v)$ we can write
\begin{align*}
    \mu^v &< Q_{d^v}(x^v)=\sum_{s\in\mtc{S}_{d^v}}\pi_{sd^v}Q(x^v,\xi_{sd^v})\\
    &=\sum_{s\in\mtc{S}_{d^v}}\pi_{sd^v}\bigg((\rho^v_s)^\top(h_{sd^v}-T_{sd^v}x) \bigg)\\
    &=\sum_{s\in\mtc{S}_{d^v}}\pi_{sd^v}\bigg((\rho^v_s)^\top(h_{sd^v}-T_{sd^v}x)\bigg)- U(1-\Ind_{d^v}(x^v))
\end{align*}
where the second equality holds by strong duality of $Q(x^v,\xi_{sd^v})$ (recall that $x^v\in\mtc{K}_{d^v}$) and the third equality holds since $\Ind_{d^v}(x^v)=1$.
\end{proof}

\Cref{prop:oc:continuous} introduces an optimality cut that renders infeasible in \eqref{eq:rmp} a pair $(x^v,\mu^v)$ whenever $\mu^v$ does not hold the correct expected cost conditional on $x^v$.
It is well-know that, given $d$, $Q_d(x)$ is convex and piece-wise linear, see e.g., \cite{WalW67}. Furthermore, $\sum_{s\in\mtc{S}_d}\pi_{sd}\rho^v_s(h_{sd}-T_{sd}x)$ defines a supporting hyperplane to the epigraph of $Q_d(x)$ at $x^v$. The last term in the optimality cut \eqref{eq:continuous:oc} ensures that the cut is effective only when $x$ enforces distribution $d^v$. 

Having established that the inequality renders the current solution $(x^v,\mu^v)$ infeasible, we next prove that the inequality is satisfied by solutions $(x^l, \mu^l)$, $x^l \neq x^v,$ such that $\mu^l$ does not under-estimate expected second stage costs.

\begin{theorem}\label{prop:oc:continuous:safe}
Let $(x^l,\mu^l)$ be a solution to \eqref{eq:rmp} such that $x^l\in\mtc{K}_{d^l}$ and for which $\mu^l = Q_{d^l}(x^l)$. 
Here $d^l\in\mtc{D}$ is such that $x^l\in\mtc{X}_{d^l}$. 
Then, solution $(x^l,\mu^l)$ satisfies inequality
\begin{equation}
    \mu\geq \sum_{s\in\mtc{S}_{d^v}}\pi_{sd^v}(\rho^v_s)^\top(h_{sd^v}-T_{sd^v}x) - U(1-\Ind_{d^v}(x))
\end{equation}
where $\rho^v_s$ is an optimal solution to the dual of $Q(x^v,\xi_{sd^v})$ and $d^v$ the distribution enforced by some solution $x^v$ to \eqref{eq:rmp}.
\end{theorem}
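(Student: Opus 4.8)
The plan is to follow the two-case structure used in the proof of \Cref{prop:fc:continuous:safe}, distinguishing whether $x^l$ induces the same distribution as $x^v$ or a different one. The single fact I would lean on throughout is that the feasible region of the dual of $Q(x,\xi_{sd^v})$ is independent of $x$: the first-stage decision enters only the dual objective, through $(h_{sd^v}-T_{sd^v}x)$. Consequently the optimal dual vertex $\rho^v_s$ computed at $x^v$ remains dual-feasible at $x^l$, and weak duality yields $(\rho^v_s)^\top(h_{sd^v}-T_{sd^v}x^l)\le Q(x^l,\xi_{sd^v})$ whenever the latter is finite. Summing over $s\in\mtc{S}_{d^v}$ with weights $\pi_{sd^v}$ shows that the affine function $\ell(x):=\sum_{s\in\mtc{S}_{d^v}}\pi_{sd^v}(\rho^v_s)^\top(h_{sd^v}-T_{sd^v}x)$ appearing on the right-hand side of \eqref{eq:continuous:oc} is an affine minorant of $Q_{d^v}(\cdot)$ on $\mtc{K}_{d^v}$.

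First I would dispatch the case $d^l=d^v$. Here $\Ind_{d^v}(x^l)=1$, so the penalty term vanishes and the inequality reduces to $\mu^l\ge \ell(x^l)$. Since $x^l\in\mtc{K}_{d^l}=\mtc{K}_{d^v}$, every scenario subproblem is feasible and the minorant property above gives $\ell(x^l)\le Q_{d^v}(x^l)=Q_{d^l}(x^l)=\mu^l$, which is exactly what is needed. This case is essentially the classical L-Shaped supporting-hyperplane argument, and I expect no difficulty.

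The substantive case is $d^l\neq d^v$, where $\Ind_{d^v}(x^l)=0$ and the inequality becomes $\mu^l\ge \ell(x^l)-U$, i.e.\ I must show $\ell(x^l)\le Q_{d^l}(x^l)+U$. My plan is to bound the two sides against the constants of \textbf{A3}: on one hand $Q_{d^l}(x^l)\ge L_{d^l}$ because $x^l\in\mtc{X}_{d^l}\cap\mtc{K}_{d^l}$, and on the other hand I would argue $\ell(x^l)\le U_{d^v}$. Since \textbf{A3} guarantees $U\ge U_{d^v}-L_{d^l}$, chaining these gives $\ell(x^l)\le U_{d^v}\le L_{d^l}+U\le Q_{d^l}(x^l)+U$, which closes the proof.

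The hard part is precisely the bound $\ell(x^l)\le U_{d^v}$ for a point $x^l$ lying outside $\mtc{X}_{d^v}$. The minorant inequality $\ell(x^l)\le Q_{d^v}(x^l)$ only helps once $Q_{d^v}(x^l)$ is finite (i.e.\ $x^l\in\mtc{K}_{d^v}$), after which $Q_{d^v}(x^l)\le U_{d^v}$ follows from the defining inequality of $U_{d^v}$; the delicate point is that $U_{d^v}$ is stated as a bound over $\mtc{X}_{d^v}$ rather than over all of $\mtc{X}$. I would therefore make explicit the regime in which the chain is clean, namely relatively complete recourse (so that $x^l\in\mtc{K}_{d^v}$ for every $d^v$) together with $U_{d^v}$ serving as a valid upper bound on $\ell$ over the whole of $\mtc{X}$, which is available whenever $\mtc{X}$ is bounded. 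Identifying and discharging this cross-distribution bound is, I expect, the crux of the argument, exactly as the boundedness of the dual region $\mtc{R}_{sd^v}$ was the crux in \Cref{prop:fc:continuous:safe}.
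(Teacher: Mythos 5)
Your proposal is correct and follows essentially the same two-case route as the paper's proof. In the case $d^l=d^v$ the two arguments coincide: the paper derives $\mu^l=Q_{d^v}(x^l)\geq\sum_{s\in\mtc{S}_{d^v}}\pi_{sd^v}(\rho^v_s)^\top(h_{sd^v}-T_{sd^v}x^l)$ via strong duality at $x^l$ plus dual feasibility of $\rho^v_s$, which is exactly your weak-duality minorant argument. In the case $d^l\neq d^v$ the paper also does what you propose: it bounds the cut's right-hand side above by $\max_{x\in\mtc{X}}\sum_{s\in\mtc{S}_{d^v}}\pi_{sd^v}Q(x,\xi_{sd^v})$ and bounds $\mu^l$ below, then invokes the definition of $U$. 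The ``crux'' you flag is genuine, and it is worth noting that the paper's own proof glosses over it: assumption A3 defines $U_d$ and $L_d$ as bounds over $\mtc{X}_d$ only, yet the paper's displayed inequality takes the maximum over all of $\mtc{X}$, which is precisely the strengthened reading of A3 (finiteness of $Q_{d^v}$ at points outside $\mtc{X}_{d^v}$, i.e.\ cross-distribution relatively complete recourse, together with the upper bound holding over all of $\mtc{X}$) that you make explicit. So your treatment is, if anything, more careful than the paper's on this point. One small simplification relative to your write-up: the lower bound does not need any cross-set extension, since $x^l\in\mtc{X}_{d^l}\cap\mtc{K}_{d^l}$ gives $\mu^l=Q_{d^l}(x^l)\geq L_{d^l}\geq\min_{d\in\mtc{D}}L_d$ directly from A3 as stated; only the upper bound on $\ell(x^l)$ requires the strengthening.
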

\begin{proof}
There are two cases to consider. The first, $d^l \neq d^v,$ reflects when the distribution enforced by solution $x^l$ is different from the distribution enforced by the solution $x^v$ for which the inequality was generated. The second, $d^l = d^v$, is when the two distributions are the same. 

When $d^l \neq d^v$ we have 
\begin{align*}
    \mu^l &\geq \sum_{s\in\mtc{S}_{d^v}}\pi_{sd^v}\bigg((\rho^v_s)^\top(h_{sd^v}-T_{sd^v}x^l)\bigg)- U\\
    &=\sum_{s\in\mtc{S}_{d^v}}\pi_{sd^v}\bigg((\rho^v_s)^\top(h_{sd^v}-T_{sd^v}x^l)\bigg)- U(1-\Ind_{d^v}(x^l))
\end{align*}
where the inequality holds due to the definition of $U$, since 
$$\sum_{s\in\mtc{S}_{d^v}}\pi_{sd^v}\bigg((\rho^v_s)^\top(h_{sd^v}-T_{sd^v}x^l)\bigg)\leq \max_{x\in\mtc{X}}\sum_{s\in\mtc{S}_{d^v}}\pi_{sd^v}Q(x,\xi_{sd^v})$$
and 
$$\mu^l = Q_{d^l}(x^l)\geq \min_{x\in\mtc{X}}\sum_{s\in\mtc{S}_{d^l}}\pi_{sd^l}Q(x,\xi_{sd^l})$$
and the equality holds since the case implies that $\Ind_{d^v}(x^l)=0$.

When $d^l=d^v$ we have, given that $\rho^l_s$ is an optimal solution to the dual of $Q(x^l,\xi_{sd^l})$,
\begin{align*}
    \mu^l &=\sum_{s\in\mtc{S}_{d^l}}\pi_{sd^l}Q(x^l,\xi_{sd^l})=\sum_{s\in\mtc{S}_{d^v}}\pi_{sd^v}Q(x^l,\xi_{sd^v})\\
    &=\sum_{s\in\mtc{S}_{d^v}}\pi_{sd^v}\bigg((\rho^l_s)^\top(h_{sd^v}-T_{sd^v}x^l)\bigg)\\
    &\geq \sum_{s\in\mtc{S}_{d^v}}\pi_{sd^v}\bigg((\rho^v_s)^\top(h_{sd^v}-T_{sd^v}x^l)\bigg)\\
    &= \sum_{s\in\mtc{S}_{d^v}}\pi_{sd^v}\bigg((\rho^v_s)^\top(h_{sd^v}-T_{sd^v}x^l)\bigg)-U(1-\Ind_{d^v}(x^l))
\end{align*}
The second equality holds because the distributions $d^l,d^v$ are the same. The third equality holds due to strong duality of $Q(x^l,\xi_{sd^l})$. 
The inequality follows from dual optimality of $\rho^l_s$ and dual feasibility of $\rho^v$. The last equality holds as $\Ind_{d^v}(x^l)=\Ind_{d^l}(x^l)=1$.
\end{proof}

\begin{lemma}\label{prop:oc:continuous:finite}
 There exist only finitely many optimality cuts \eqref{eq:continuous:oc}.
\end{lemma}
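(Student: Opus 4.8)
The plan is to mirror the argument already used for \Cref{prop:fc:continuous:finite}, exploiting the observation that each optimality cut \eqref{eq:continuous:oc} is determined by a finite amount of data. Once the distribution $d^v\in\mtc{D}$ is fixed, the cut depends only on the tuple of dual multipliers $(\rho^v_s)_{s\in\mtc{S}_{d^v}}$: the constant $U$ is a single fixed scalar (guaranteed by A3), while the coefficients $h_{sd^v},T_{sd^v},\pi_{sd^v}$ and the indicator $\Ind_{d^v}(\cdot)$ are all determined by the pair $(s,d^v)$. Thus it suffices to show that, as $d^v$ and the generating solution $x^v$ range over all possibilities, only finitely many such tuples of multipliers can arise.

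First I would recall that $\mtc{D}$ is finite and each scenario set $\mtc{S}_d$ is finite, so the index set over $(d^v,s)$ is finite. The essential point is that each $\rho^v_s$ is an optimal solution to the dual of $Q(x^v,\xi_{sd^v})$, namely $\max_{\rho}\{\rho^\top(h_{sd^v}-T_{sd^v}x^v):W_{sd^v}^\top\rho\leq q_{sd^v}\}$, whose feasible region $\{\rho:W_{sd^v}^\top\rho\leq q_{sd^v}\}$ is independent of $x^v$, since the first-stage decision enters only through the linear objective. Hence for each $(s,d^v)$ the multipliers range over a single, $x$-independent polyhedron. Because the dual objective is linear, whenever the optimum is attained we may always select $\rho^v_s$ at an extreme point of this fixed polyhedron, of which there are finitely many. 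Consequently the number of distinct tuples $(\rho^v_s)_{s\in\mtc{S}_{d^v}}$, and therefore the number of distinct inequalities \eqref{eq:continuous:oc}, is finite.

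The only step requiring mild care is the restriction of optimal dual solutions to extreme points, which I expect to be the main (though routine) obstacle. This is justified exactly as in the classical L-Shaped method: the hypothesis $x^v\in\mtc{K}_{d^v}$ ensures the subproblems $Q(x^v,\xi_{sd^v})$ are feasible, and the assumption that the second stage is bounded when feasible guarantees the dual attains its optimum at a vertex of its (fixed) feasible polyhedron. With these standard facts in place, the finiteness claim follows immediately from the finiteness of $\mtc{D}$, of each $\mtc{S}_d$, and of the extreme-point set of each dual polyhedron.
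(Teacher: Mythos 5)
Your proof is correct and follows essentially the same approach as the paper's: finiteness of $\mtc{D}$, of each $\mtc{S}_d$, and of the extreme points of the ($x$-independent) dual feasible polyhedron of $Q(x,\xi_{sd})$. The paper states this in one line; your only addition is the explicit (and standard) justification that optimal dual solutions may be taken at extreme points, which is a harmless elaboration rather than a different argument.
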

\begin{proof}
This follows immediately from the finite number of distributions $\mtc{D}$ and from the finite number of extreme points $\rho$ in the feasible region of the dual to $Q(x,\xi)$ for each $\xi$.
\end{proof}

\subsubsection{Reformulation}
In this section we present a reformulation of \eqref{eq:speu} in terms of the feasibility and optimality cuts just proposed. We also prove the validity of this reformulation. Namely, that an optimal solution to this reformulation can be used to derive an optimal solution to \eqref{eq:speu} with the same objective function value.

The reformulation is as follows 

\begin{align}\label{eq:cont_reform}
     \min_{x\in \mtc{X}}&  ~c^\top x+\mu  \\
%     x \in \mtc{X}_{d}  \implies &
  %   \begin{cases}
     &U_{sd}(1-\Ind_{d}(x)) \geq  (\sigma^v_s)^\top(h_{sd}-T_{sd}x)   \;\;\; \forall d \in \mtc{D}, s \in \mathcal{S}_{d}, \sigma^{v}_s \in F_{d}^{s}, \nonumber \\
	 &\mu\geq  \sum_{s\in\mtc{S}_{d}}\pi_{sd}(\rho_s^v)^\top (h_{sd}-T_{sd}x) - U(1-\Ind_{d}(x)) \;\;\; \forall d \in \mtc{D}, \rho^{v} \in O_{d} \nonumber.
%     \end{cases} \nonumber
\end{align}
where $F_{d}^{s}$ is the set of extreme points of the polyhedron $\{\sigma: \sigma W_{sd} \leq 0, \mathbf{-1} \leq \sigma \leq \mathbf{1} \}$,
$\rho^v=(\rho_s^v)_{s\in\mtc{S}_d}$, $O_{d}^{s}$ is the set of extreme points of the polyhedron $\{\rho: \rho W_{sd} \leq q_{sd}\}$, $O_d=O_d^1\times O_d^2 \times \cdots \times O_d^{\vert\mtc{S}_d\vert}$, and $U_{sd}$ and $U$ are defined as above. 
We note that, given \Cref{prop:fc:continuous:finite} and \Cref{prop:oc:continuous:finite}, there are a finite number of feasibility and optimality cuts used to define this formulation. % consists of a finite number of constraints.

\begin{proposition}\label{prop:equivalent_reform}
An optimal solution $(x^*,\mu^*), x^* \in \mathcal{X}_{d^*}$ of \eqref{eq:cont_reform} induces an optimal solution $x^*$ to \eqref{eq:speu} of value $c^{T}x^* + \mu^*$ and $\mu^*=\Ex_{\pr{P}_{d^*}}[Q(x^*,\xi)]$. 
\end{proposition}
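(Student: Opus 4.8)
The plan is to prove the two inequalities that together force the optimal values of \eqref{eq:cont_reform} and \eqref{eq:speu} to coincide, with a matching minimizer. Throughout I use assumption A1 to write $\Ex_{\pr{P}_d}[Q(x,\xi)]=Q_d(x):=\sum_{s\in\mtc{S}_d}\pi_{sd}Q(x,\xi_{sd})$, and I repeatedly invoke linear programming strong duality together with the fact that \eqref{eq:cont_reform} enumerates \emph{all} extreme points of the relevant dual polyhedra.

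First I would analyze the constraints of \eqref{eq:cont_reform} that are active for the distribution actually enforced by a feasible point $x\in\mtc{X}_d$, i.e., where $\Ind_d(x)=1$. For these, the feasibility cuts collapse to $(\sigma^v_s)^\top(h_{sd}-T_{sd}x)\leq 0$ for every extreme point $\sigma^v_s\in F_d^s$. Since the polyhedron $\{\sigma:\sigma W_{sd}\leq 0,\,-\mathbf{1}\leq\sigma\leq\mathbf{1}\}$ is bounded, its linear maximum is attained at an extreme point, so this family is equivalent to $\max_\sigma(\sigma)^\top(h_{sd}-T_{sd}x)\leq 0$; by strong duality of $\Psi(x,\xi_{sd})$ this is exactly $x\in\mtc{K}_{sd}$, and ranging over $s$ gives $x\in\mtc{K}_d$, hence $Q_d(x)<\infty$. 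Analogously, because $O_d=O_d^1\times\cdots\times O_d^{|\mtc{S}_d|}$ is the full product of extreme-point sets, the optimality cuts active at $x$ reduce to $\mu\geq\sum_s\pi_{sd}\max_{\rho_s\in O_d^s}(\rho_s)^\top(h_{sd}-T_{sd}x)$, and since each inner maximum equals $Q(x,\xi_{sd})$ by strong duality (finite because $x\in\mtc{K}_d$ and the recourse is bounded below), this family is equivalent to $\mu\geq Q_d(x)$.

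Next I would show the remaining constraints --- those indexed by distributions $d'\neq d$, for which $\Ind_{d'}(x)=0$ --- are redundant at any such point. This is precisely the content of \Cref{prop:fc:continuous:safe} and \Cref{prop:oc:continuous:safe}: the big-$M$ terms $U_{sd'}(1-\Ind_{d'}(x))=U_{sd'}$ and $U(1-\Ind_{d'}(x))=U$, whose sizes are guaranteed by A2 and A3, dominate the respective right-hand sides, so no cross-distribution cut binds. Combining the two steps, the projection of the feasible set of \eqref{eq:cont_reform} onto $x$, restricted to $\mtc{X}_d$, is exactly $\mtc{X}_d\cap\mtc{K}_d$, and on this set every feasible pair satisfies $c^\top x+\mu\geq c^\top x+Q_d(x)$, the objective of \eqref{eq:speu} at $x$. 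Conversely, given an optimal $x^\dagger$ of \eqref{eq:speu} with $x^\dagger\in\mtc{X}_{d^\dagger}$, setting $\mu^\dagger=Q_{d^\dagger}(x^\dagger)$ yields a point satisfying all cuts (the enforced ones by the duality equalities just derived, the cross ones by the redundancy argument), so it is feasible for \eqref{eq:cont_reform} with the same objective value.

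These two directions show the optimal values agree. To finish I would argue tightness: at an optimum $(x^*,\mu^*)$ with $x^*\in\mtc{X}_{d^*}$ the inequality $\mu^*\geq Q_{d^*}(x^*)$ must hold with equality, for otherwise replacing $\mu^*$ by $Q_{d^*}(x^*)$ keeps all constraints satisfied while strictly lowering $c^\top x^*+\mu^*$, contradicting optimality; hence $\mu^*=Q_{d^*}(x^*)=\Ex_{\pr{P}_{d^*}}[Q(x^*,\xi)]$ by A1, and $x^*$ attains the common optimal value $c^\top x^*+\mu^*$ in \eqref{eq:speu}. The main obstacle I anticipate is the equivalence between the enumerated optimality cuts and the exact value $Q_d(x)$: one must argue carefully that the Cartesian-product indexing $O_d$ lets each scenario's dual maximum be taken independently, and that strong duality applies, whose finiteness requires the feasibility cuts to have already pinned $x$ into $\mtc{K}_d$. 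The redundancy of the cross-distribution cuts is conceptually routine but hinges on the constants $U_{sd}$ and $U$ being defined exactly as in A2 and A3.
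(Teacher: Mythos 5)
Your proof is correct, but it follows a genuinely different route from the paper's. The paper argues by contradiction using only the violation results (\Cref{prop:fc:continuous,prop:oc:continuous}): if $x^*\notin\mtc{K}_{d^*}$ or $\mu^*<Q_{d^*}(x^*)$, then a cut violated by $(x^*,\mu^*)$ exists among the finitely many cuts defining \eqref{eq:cont_reform}, contradicting its feasibility; the paper stops there, leaving implicit both the tightness $\mu^*=Q_{d^*}(x^*)$ and the optimality of $x^*$ for \eqref{eq:speu}. You instead characterize the feasible region of \eqref{eq:cont_reform} directly: using boundedness of the feasibility-dual polyhedron, strong duality, and the Cartesian-product structure of $O_d$, you show the cuts active on $\mtc{X}_d$ are equivalent to $x\in\mtc{K}_d$ and $\mu\geq Q_d(x)$, while cross-distribution cuts are redundant by the safety results (\Cref{prop:fc:continuous:safe,prop:oc:continuous:safe}); you then close with an explicit converse embedding of optimal solutions of \eqref{eq:speu} into \eqref{eq:cont_reform} and a tightness argument at the optimum. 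What your route buys is completeness: the paper's contradiction establishes only $x^*\in\mtc{K}_{d^*}$ and $\mu^*\geq Q_{d^*}(x^*)$, whereas the claimed equality and the optimality of $x^*$ in \eqref{eq:speu} genuinely require the ``safe'' direction---that legitimate solutions with correct recourse values are not cut off---which you supply explicitly and the paper's proof never invokes. What the paper's route buys is brevity and a template that transfers verbatim to the integer case of \Cref{sec:benders:int}, where no dual extreme-point characterization of the cuts is available. One caveat you share with the paper: equating $\max_{\rho_s\in O_d^s}(\rho_s)^\top(h_{sd}-T_{sd}x)$ with $Q(x,\xi_{sd})$ presumes the dual of the recourse LP attains its optimum at an extreme point (i.e., the dual polyhedron is pointed), an assumption left implicit throughout the paper's treatment as well.
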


\begin{proof}
The proof can be obtained by contradiction. First, assume that the solution $x^*$ to \eqref{eq:cont_reform} is such that $x^*\notin\mtc{K}_{d^*}$. In other words, for the relevant distribution $d^*$, $x^*$ induces an infeasible subproblem for some scenario $s \in \mathcal{S}_{d^*}.$ However, by \Cref{prop:fc:continuous} this implies a feasibility cut of the form \eqref{eq:continuous:fc} can be generated and added to \eqref{eq:cont_reform}, contradicting the premise that \eqref{eq:cont_reform} is formulated with all such (finitely many) cuts \eqref{eq:continuous:fc}. Next, suppose $\mu^* < \sum_{d\in\mtc{D}}\Ind_d(x^*)\Ex_{\pr{P}_d}[Q(x^*,\xi)] = \Ex_{\pr{P}_{d^*}}[Q(x^*,\xi)]= Q_{d^*}(x^*).$ However, by \Cref{prop:oc:continuous} this implies an optimality cut of the form \eqref{eq:continuous:oc} can be generated and added to \eqref{eq:cont_reform}, contradicting the premise that \eqref{eq:cont_reform} is formulated with all such (finitely many) cuts.
\end{proof}

\begin{theorem}\label{prop:convergence}
Assume \eqref{eq:speu} is feasible and bounded. Then the L-Shaped algorithm with distribution-specific cuts converges to an optimal solution in a finite number of iterations.  
\end{theorem}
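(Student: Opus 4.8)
The plan is to establish finite termination and correctness separately, exploiting the finiteness lemmas together with the validity and safety results already proved. Finite termination will follow from the observation that every non-terminating iteration appends to the relaxed master a cut that was not previously present, so the (finite) reservoir of admissible cuts is strictly depleted; correctness at termination will follow from a standard sandwich between the relaxation-based lower bound furnished by the RMP and the true objective value of the incumbent $x^i$, which is feasible for \eqref{eq:speu}.

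First I would treat termination. Fix an iteration $i$ at which the algorithm does not stop, and let $(x^i,\mu^i)$ solve the RMP built from $\underline{\mtc{F}_d^i},\underline{\mtc{O}_d^i}$. Because $(x^i,\mu^i)$ is feasible for that RMP, it satisfies every cut already present. If the feasibility check fails, \Cref{prop:fc:continuous} yields a cut \eqref{eq:continuous:fc} strictly violated by $x^i$; a cut violated by $x^i$ cannot coincide with any cut already in the RMP (all of which $x^i$ satisfies), so it is genuinely new and $\underline{\mtc{F}_{d^i}^{i+1}}\supsetneq\underline{\mtc{F}_{d^i}^{i}}$. The same reasoning, via \Cref{prop:oc:continuous}, applies when instead the optimality check fails: the generated cut \eqref{eq:continuous:oc} is violated by $(x^i,\mu^i)$ and hence new. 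By \Cref{prop:fc:continuous:finite} and \Cref{prop:oc:continuous:finite} there are only finitely many admissible feasibility and optimality cuts in total; since each non-terminating iteration removes at least one of them from the pool of unused cuts, the algorithm can execute only finitely many non-terminating iterations and must set \texttt{solved} to \texttt{true} after finitely many steps.

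Next I would verify optimality at termination. When the algorithm stops at some iterate $(x^i,\mu^i)$ with $x^i\in\mtc{X}_{d^i}$, both checks have passed: $x^i\in\mtc{K}_{d^i}$ (so the subproblems are feasible and $x^i$ is feasible for \eqref{eq:speu}) and $\mu^i\geq Q_{d^i}(x^i)$. Since the RMP is formulated with a subset of the cuts of the exact reformulation \eqref{eq:cont_reform}, it is a relaxation of \eqref{eq:cont_reform}, which by \Cref{prop:equivalent_reform} is equivalent to \eqref{eq:speu}; hence its optimal value satisfies $c^\top x^i+\mu^i\leq z^\star$, where $z^\star$ denotes the optimal value of \eqref{eq:speu}. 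On the other hand $x^i$ is feasible for \eqref{eq:speu}, so $c^\top x^i+Q_{d^i}(x^i)\geq z^\star$, and the passed optimality check gives $c^\top x^i+\mu^i\geq c^\top x^i+Q_{d^i}(x^i)$. Chaining these,
\[
z^\star\;\geq\;c^\top x^i+\mu^i\;\geq\;c^\top x^i+Q_{d^i}(x^i)\;\geq\;z^\star,
\]
so all inequalities are equalities; in particular $\mu^i=Q_{d^i}(x^i)$ and $c^\top x^i+\mu^i=z^\star$, i.e.\ $x^i$ is optimal for \eqref{eq:speu}.

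The hard part, and the step I would be most careful about, is the claim that each non-terminating iteration produces a previously-absent cut, since this is precisely what converts finiteness of the cut families into finiteness of the iteration count. It rests on pairing each violation result (\Cref{prop:fc:continuous}, \Cref{prop:oc:continuous}) with the elementary fact that the incumbent satisfies all cuts currently in the RMP, so a freshly violated cut must be new. I would also record two bookkeeping points: the safety results \Cref{prop:fc:continuous:safe} and \Cref{prop:oc:continuous:safe} underpin the exactness of \eqref{eq:cont_reform} invoked above, since they guarantee the cuts never exclude a genuinely feasible, correctly-estimated solution; and the transient case in which $\mu$ is initialized to $\infty$ is harmless, as $\mu$ acquires a finite value once the first optimality cut is added and does not affect the counting. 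The identical template, namely finiteness of the cut families, newness of each generated cut, and the relaxation sandwich, delivers convergence for the mixed-integer case of \Cref{sec:benders:int} once the analogous results there are invoked.
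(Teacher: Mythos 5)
Your proof is correct and follows essentially the same route as the paper, which simply cites the finiteness of cuts \eqref{eq:continuous:fc} and \eqref{eq:continuous:oc} together with \Cref{prop:equivalent_reform} as sufficient for convergence of a method that adds and never removes such cuts. You have merely made explicit the two details the paper leaves implicit: that every non-terminating iteration adds a genuinely new cut (because the incumbent satisfies all cuts currently in the RMP but violates the generated one, via \Cref{prop:fc:continuous} and \Cref{prop:oc:continuous}), and that at termination the relaxation bound and the feasibility of $x^i$ sandwich its objective value to the optimum.
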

  \begin{proof}
    The observation that there are a finite number of cuts of the form  \eqref{eq:continuous:fc} and \eqref{eq:continuous:oc}, coupled with \Cref{prop:equivalent_reform}, provides a sufficient condition for the convergence of an L-Shaped method that iteratively adds, and does not remove, cuts of those forms to \eqref{eq:rmp}.
  \end{proof}

\subsection{Integer second stage}\label{sec:benders:int}
In this section, we consider the case where the expected second stage cost may take an arbitrary form.
That is, given $x$, the quantity $Q_d(x)=\Ex_{\pr{P}_d}[Q(x,\xi)]$ can be computed.
A case of practical interest is when the second stage problem takes the form of a Mixed Integer Linear Program (MILP), that is %forms a MILP problem, that is
\begin{equation}\label{eq:sp2s:integer}
Q(x,\xi)=\min_{y}\{q^\top~ y|Wy=h-Tx, y\in\mtc{Y}\}
\end{equation}
and $\mtc{Y}$ imposes integrality restrictions on at least one second stage variable. As such, duality theory is not available to generate the cuts presented above for the case of a continuous second stage. 
The following assumption is required for both the optimality and feasibility cuts proposed in this section. 
\begin{itemize}
    \item[\textbf{A4}] $\mtc{X}=\{0,1\}^{n_1}$.
\end{itemize}
In other words, the first stage decision variables that have an impact on second stage costs must take on binary values. Relatedly, both inequalities rely on identifying the set $\mtc{I}^v=\{i\in\{1,\ldots,n_1\}|x^v_i=1\}$ for solutions $x^v$ to \eqref{eq:rmp}.

\subsubsection{Distribution-specific feasibility cut}
\label{subsubsec:mip_sp:feas_cut}

The inequality we present in this section can render any solution infeasible in \eqref{eq:rmp}, including those that induce an infeasible second stage subproblem of the relevant distribution. 
As for the case of continuous second stage problems, we define for all $d\in\mtc{D}$ and $s\in\mtc{S}_d$ the set 
$$\mtc{K}_{sd}=\big\{x\in\R^{n_1}|\exists y\in\mtc{Y}: W_sy=h_s-T_sx \big\}.$$
Similarly, for all $d\in\mtc{D}$, we let $\mtc{K}_{d}=\bigcap_{s\in\mtc{S}_d}\mtc{K}_{sd}$. We then generate cuts of the form \eqref{eq:fc:binary} for solutions $x^v\notin \mtc{K}_{d^v}$.

\begin{theorem}\label{prop:fc:binary}
Let $x^v$ be a solution to \eqref{eq:rmp}.
Then solution $x^v$ violates the following inequality.
\begin{equation}
\label{eq:fc:binary}
    \sum_{i\in\mtc{I}^v}x_i-\sum_{i\notin\mtc{I}^v}x_i\leq |\mtc{I}^v|-1
\end{equation}
Furthermore, the inequality is satisfied by any $x^l\neq x^v$.
\end{theorem}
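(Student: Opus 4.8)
The plan is to treat the two assertions separately, both by direct evaluation of the left-hand side of \eqref{eq:fc:binary} on binary vectors. The guiding observation is that, over $x\in\{0,1\}^{n_1}$, the quantity $\sum_{i\in\mtc{I}^v}x_i-\sum_{i\notin\mtc{I}^v}x_i$ is maximized precisely at $x=x^v$, where it equals $|\mtc{I}^v|$, and any deviation from that $0/1$ pattern strictly decreases it by at least one unit. This is the canonical ``no-good'' cut that excludes the single point $x^v$, and unlike the continuous-recourse cuts it requires no duality or convexity, only assumption A4.

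First, I would establish that $x^v$ violates the inequality. By the definition of $\mtc{I}^v$ we have $x^v_i=1$ for $i\in\mtc{I}^v$ and $x^v_i=0$ for $i\notin\mtc{I}^v$. Substituting $x=x^v$ therefore gives $\sum_{i\in\mtc{I}^v}x^v_i=|\mtc{I}^v|$ and $\sum_{i\notin\mtc{I}^v}x^v_i=0$, so the left-hand side equals $|\mtc{I}^v|$. Since $|\mtc{I}^v|>|\mtc{I}^v|-1$, the inequality fails at $x^v$, which is the claimed violation.

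For the second claim I would take an arbitrary $x^l\in\{0,1\}^{n_1}$ with $x^l\neq x^v$ and bound the left-hand side from above. Because $x^l$ is binary, the crude bounds $\sum_{i\in\mtc{I}^v}x^l_i\leq|\mtc{I}^v|$ and $\sum_{i\notin\mtc{I}^v}x^l_i\geq 0$ hold, giving a left-hand side of at most $|\mtc{I}^v|$. The key step is to use $x^l\neq x^v$ to sharpen one of these bounds by exactly one unit: since the two points disagree in some coordinate $j$, and $x^v_j$ is pinned to $1$ or $0$ according to whether $j\in\mtc{I}^v$, one of two cases must hold. Either $j\in\mtc{I}^v$ with $x^l_j=0$, so that $\sum_{i\in\mtc{I}^v}x^l_i\leq|\mtc{I}^v|-1$; or $j\notin\mtc{I}^v$ with $x^l_j=1$, so that $\sum_{i\notin\mtc{I}^v}x^l_i\geq 1$. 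In either case the left-hand side is at most $|\mtc{I}^v|-1$, and the inequality is satisfied.

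The argument is entirely elementary, so there is no real obstacle; the only point requiring care is verifying that a single coordinate disagreement necessarily moves the expression in the favorable direction. This follows because the objective coefficients are $+1$ on $\mtc{I}^v$ and $-1$ off it, matched exactly to the $0/1$ pattern of $x^v$, so flipping any coordinate away from its value at $x^v$ can only lower the sum. I would present the case split explicitly to make the one-unit decrease transparent.
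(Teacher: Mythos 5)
Your proof is correct and follows essentially the same argument as the paper: direct evaluation at $x^v$ gives the left-hand side value $|\mtc{I}^v|$, and for $x^l\neq x^v$ the same case split (a coordinate in $\mtc{I}^v$ dropping to $0$, or one outside $\mtc{I}^v$ rising to $1$) forces the left-hand side down to at most $|\mtc{I}^v|-1$. The only cosmetic difference is that the paper states the conclusion as a strict inequality $<|\mtc{I}^v|$, which for integer-valued expressions is equivalent to your explicit bound $\leq|\mtc{I}^v|-1$.
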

\begin{proof}
As $x_i^v = 1 \;\forall i \in \mtc{I}^v$ and $x_i^v = 0 \;\forall i \not\in \mtc{I}^v$, we have that
$$\sum_{i\in\mtc{I}^v}x_i^v-\sum_{i\notin\mtc{I}^v}x_i^v= |\mtc{I}^v|.$$ Thus, $x^v$ violates the proposed inequality.

Next, consider a solution $x^l \neq x^v$. As either $\exists i \in \mtc{I}^v$ such that $x_i^l = 0$ or $\exists i \not\in \mtc{I}^v$ such that $x_i^l = 1$, we always have that
$$\sum_{i\in\mtc{I}^v}x_i^l-\sum_{i\notin\mtc{I}^v}x_i^l<|\mtc{I}^v|.$$ 
%for all $x^l\neq x^v$.
\end{proof}

Observe, however, that unlike cuts \eqref{eq:continuous:fc}, cuts of type \eqref{eq:fc:binary} only render infeasible a single solution. 

\subsubsection{Distribution-specific optimality cut}
\label{subsubsec:mip_sp:opt_cut}
We next propose an optimality cut that renders infeasible a solution $(x^*,\mu^*)$ such that $\mu^*$ does not correctly estimate expected second stage costs.
To derive optimality cuts for this case we extend the optimality cuts for integer stochastic programs \cite{LapL93}, which rely on assumption A4 and A5 introduced next. 
 
 \begin{itemize}
     \item[\textbf{A5}] There exists a constant $U$ such that $\infty>U\geq \max_{d\in\mtc{D}}L_d-\min_{d\in\mtc{D}}L_d$ where $L_d\leq \min_{x\in\mtc{X}}\sum_{s\in\mtc{S}_{d}}\pi_{sd}Q(x,\xi_s)$

 \end{itemize}

The optimality cut is as follows. 
\begin{theorem}\label{prop:oc:int}
Let $(x^v,\mu^v)$ be a solution to \eqref{eq:rmp} for which $\mu^v<Q_{d^v}(x^v)$. 
Then, $(x^v,\mu^v)$ violates the following inequality.
    \begin{equation}\label{eq:ls:int:oc}
        \mu\geq (Q_{d^v}(x^v)-L_{d^v})\left(\sum_{i\in\mtc{I}^v}x_i-\sum_{i\notin\mtc{I}^v}x_i\right)-\left(Q_{d^v}(x^v)-L_{d^v}\right)\left(|\mtc{I}^v|-1\right)+L_{d^v}-U\left(1-\Ind_{d^v}(x)\right)
    \end{equation}
\end{theorem}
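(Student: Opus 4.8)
The plan is to verify the violation by direct substitution of $x=x^v$ into the right-hand side of \eqref{eq:ls:int:oc}, showing that it collapses exactly to $Q_{d^v}(x^v)$; the hypothesis $\mu^v<Q_{d^v}(x^v)$ then delivers the violation at once. This mirrors the argument for the continuous case in \Cref{prop:oc:continuous}, where the cut was engineered to be tight at the point that generated it.

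First I would record the two elementary facts that govern the evaluation at $x^v$. Since $x^v\in\mtc{X}_{d^v}$ we have $\Ind_{d^v}(x^v)=1$, so the final term $U(1-\Ind_{d^v}(x))$ vanishes. Moreover, by the definition of $\mtc{I}^v=\{i\mid x_i^v=1\}$ together with A4 (so that $x^v\in\{0,1\}^{n_1}$), the components indexed by $\mtc{I}^v$ are all $1$ and those outside are all $0$; hence $\sum_{i\in\mtc{I}^v}x_i^v-\sum_{i\notin\mtc{I}^v}x_i^v=|\mtc{I}^v|$. This is precisely the linear form appearing in the feasibility cut \eqref{eq:fc:binary}.

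Second, I would substitute these values into the right-hand side. The coefficient $(Q_{d^v}(x^v)-L_{d^v})$ then multiplies $|\mtc{I}^v|-(|\mtc{I}^v|-1)=1$, and adding back the constant $L_{d^v}$ (with the $U$-term gone) leaves exactly $Q_{d^v}(x^v)$. Thus at $x=x^v$ the inequality reads $\mu\geq Q_{d^v}(x^v)$, which the pair $(x^v,\mu^v)$ violates by the assumption $\mu^v<Q_{d^v}(x^v)$.

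I do not anticipate a genuine obstacle for this particular claim: it is a single substitution exploiting the deliberate algebraic design of the cut, namely that the ``big-$M$''-style coefficient $Q_{d^v}(x^v)-L_{d^v}$ and the offset $|\mtc{I}^v|-1$ are chosen so that the linear term equals $Q_{d^v}(x^v)$ at the generating point. The substantive work — verifying that the cut is \emph{valid and safe} for every other solution $x^l$, where the linear form drops to at most $|\mtc{I}^v|-1$ and the right-hand side is bounded below using $L_{d^v}$, A5, and the constant $U$ — belongs to the companion validity argument rather than to this violation statement.
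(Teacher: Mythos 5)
Your proof is correct and follows essentially the same route as the paper: evaluate the cut at $x=x^v$, use $\Ind_{d^v}(x^v)=1$ to kill the $U$-term and A4 to get $\sum_{i\in\mtc{I}^v}x_i^v-\sum_{i\notin\mtc{I}^v}x_i^v=|\mtc{I}^v|$, so the right-hand side collapses to $Q_{d^v}(x^v)$ and the hypothesis $\mu^v<Q_{d^v}(x^v)$ yields the violation. The only difference is cosmetic: the paper outsources this final algebraic collapse to Proposition 2 of Laporte and Louveaux, whereas you carry out the substitution explicitly.
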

\begin{proof}
For $x=x^v$ we have $\Ind_{d^v}(x^v)=1$ and the inequality reduces to 
$$\mu\geq \left(Q_{d^v}(x^v)-L_{d^v}\right)\left(\sum_{i\in\mtc{I}^v}x_i-\sum_{i\notin\mtc{I}^v}x_i\right)-\left(Q_{d^v}(x^v)-L_{d^v}\right)\left(|\mtc{I}^v|-1\right)+L_{d^v}$$
Following Proposition 2 in \cite{LapL93} for $x=x^v$, the right-hand-side reduces to $Q_{d^v}(x^v)$. Hence, the inequality is violated given the premise that
 $\mu^v<Q_{d^v}(x^v)$.
\end{proof}
The following proposition shows that the inequality is not violated by solutions $(x^l,\mu^l)$ such that $\mu^l$ does not under-estimate expected second stage costs.
\begin{theorem}\label{prop:oc:int:safe}
Let $(x^l,\mu^l)$ be a solution to \eqref{eq:rmp} for which $\mu^l=Q_{d^l}(x^l)$. 
Then, $(x^l,\mu^l)$ satisfies optimality cut \eqref{eq:ls:int:oc} generated for some $x^v, x^v \neq x^l$.
\end{theorem}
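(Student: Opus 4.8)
The plan is to mirror the two-case structure used in the proof of \Cref{prop:oc:continuous:safe}, evaluating the right-hand side of \eqref{eq:ls:int:oc} at $x=x^l$ and showing it never exceeds $\mu^l=Q_{d^l}(x^l)$. First I would isolate the Laporte--Louveaux portion of the cut, namely $g(x):=\left(Q_{d^v}(x^v)-L_{d^v}\right)\left(\sum_{i\in\mtc{I}^v}x_i-\sum_{i\notin\mtc{I}^v}x_i\right)-\left(Q_{d^v}(x^v)-L_{d^v}\right)\left(|\mtc{I}^v|-1\right)+L_{d^v}$, so that the inequality reads $\mu\geq g(x)-U\left(1-\Ind_{d^v}(x)\right)$. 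The two facts I would establish up front are: (i) $Q_{d^v}(x^v)-L_{d^v}\geq 0$, immediate from the definition of $L_{d^v}$ in A5 since $L_{d^v}\leq\min_{x\in\mtc{X}}Q_{d^v}(x)\leq Q_{d^v}(x^v)$; and (ii) by \Cref{prop:fc:binary}, for $x^l\neq x^v$ the integer quantity $\sum_{i\in\mtc{I}^v}x_i^l-\sum_{i\notin\mtc{I}^v}x_i^l$ is at most $|\mtc{I}^v|-1$. Combining (i) and (ii) gives $g(x^l)\leq L_{d^v}$, which is the standard evaluation of the integer optimality cut at a point other than the one for which it was generated (cf. Proposition 2 in \cite{LapL93}).

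With $g(x^l)\leq L_{d^v}$ in hand, I would split on whether $x^l$ enforces the same distribution as $x^v$. If $d^l=d^v$, then $\Ind_{d^v}(x^l)=1$, the penalty term vanishes, and the right-hand side of \eqref{eq:ls:int:oc} equals $g(x^l)\leq L_{d^v}$. Since $\mu^l=Q_{d^l}(x^l)=Q_{d^v}(x^l)\geq L_{d^v}$ by the defining inequality for $L_{d^v}$, the cut is satisfied in this case.

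If instead $d^l\neq d^v$, then $\Ind_{d^v}(x^l)=0$ and the right-hand side is $g(x^l)-U\leq L_{d^v}-U$. Here I would invoke A5: since $U\geq\max_{d\in\mtc{D}}L_d-\min_{d\in\mtc{D}}L_d\geq L_{d^v}-\min_{d\in\mtc{D}}L_d$, we obtain $L_{d^v}-U\leq\min_{d\in\mtc{D}}L_d\leq L_{d^l}\leq Q_{d^l}(x^l)=\mu^l$, so the cut again holds. The main obstacle is precisely this cross-distribution case: one must verify that the constant $U$ supplied by A5 is large enough that the $-U$ penalty drives the right-hand side below the smallest attainable value of $\mu^l$ across all distributions, and this is exactly what the bound $U\geq\max_d L_d-\min_d L_d$ guarantees. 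The remaining manipulations are routine arithmetic, so I expect no difficulty beyond carefully tracking the sign of the factor $\sum_{i\in\mtc{I}^v}x_i^l-\sum_{i\notin\mtc{I}^v}x_i^l-(|\mtc{I}^v|-1)$ in step (ii).
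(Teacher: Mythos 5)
Your proof is correct and follows essentially the same route as the paper's: the same case split on $d^l=d^v$ versus $d^l\neq d^v$, the same bound $\sum_{i\in\mtc{I}^v}x_i^l-\sum_{i\notin\mtc{I}^v}x_i^l\leq|\mtc{I}^v|-1$ for $x^l\neq x^v$ making the Laporte--Louveaux portion at most $L_{d^v}$, and the same reliance on A5 to absorb the $-U$ penalty in the cross-distribution case. If anything, your explicit chain $L_{d^v}-U\leq\min_{d\in\mtc{D}}L_d\leq L_{d^l}\leq Q_{d^l}(x^l)=\mu^l$ spells out what the paper compresses into the terse remark that the inequality is satisfied ``by subtraction of $U$.''
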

\begin{proof}
We again consider the cases $d^l\neq d^v$ and $d^l=d^v$ separately.

When $d^l\neq d^v$ we have $\Ind_{d^v}(x^l)=0$ and the inequality reduces to
$$\mu^l\geq \left(Q_{d^v}(x^v)-L_{d^v}\right)\left(\sum_{i\in\mtc{I}^v}x^l_i-\sum_{i\notin\mtc{I}^v}x^l_i\right)-\left(Q_{d^v}(x^v)-L_{d^v}\right)\left(|\mtc{I}^v|-1\right)+L_{d^v}-U$$
Letting $(\mtc{I}^v)^C$ represent the complement of $\mtc{I}^v$ to $\mtc{I}$ we have that 
$$-|(\mtc{I}^v)^C|\leq \sum_{i\in\mtc{I}^v}x^l_i-\sum_{i\notin\mtc{I}^v}x^l_i\leq |\mtc{I}^v|-1$$
Given this, the inequality reduces to $\mu^l\geq \alpha+L_{d^v}-U$ wherein $\alpha$ is such that $$-(Q_{d^v}(x^v)-L_{d^v})(|\mtc{I}|-1)\leq \alpha\leq 0 $$
By subtraction of $U$ the inequality is always satisfied, even for $\alpha=0$.

When $d^l= d^v$, given that $x^v\neq x^l$ we have $\Ind_{d^v}(x^l)=1$ and the inequality reduces to
$$\mu\geq \left(Q_{d^v}(x^v)-L_{d^v}\right)\left(\sum_{i\in\mtc{I}^v}x^l_i-\sum_{i\notin\mtc{I}^v}x^l_i\right)-\left(Q_{d^v}(x^v)-L_{d^v}\right)\left(|\mtc{I}^v|-1\right)+L_{d^v}$$
Then, the inequality becomes $\mu\geq \alpha+L_{d^v}$ which, given the definition of $L_{d^v},$ and that $\alpha\leq 0$, is always satisfied for distribution $d^v$. 
\end{proof}

\subsubsection{Reformulation}
In this section we present a reformulation of \eqref{eq:speu} in terms of the feasibility and optimality cuts just proposed for the case of second stage problems that take the form of MILPs.
We also prove that an optimal solution to this reformulation can be used to derive an optimal solution to \eqref{eq:speu} with the same objective function value. The reformulation is as follows. 

\begin{align}\label{eq:int_reform}
     \min_{x\in \mtc{X}} &  ~c^\top x+ \mu  \\
  &\sum_{i\in\mtc{I}^v}x_i-\sum_{i\notin\mtc{I}^v}x_i\leq |\mtc{I}^v|-1    &  \forall d \in \mtc{D}, x^v \in \mtc{X}_d\setminus\mtc{K}_{d},\nonumber\\   
&\mu\geq \left(Q_{d^v}(x^v)-L_{d^v}\right)\left(\sum_{i\in\mtc{I}^v}x_i-\sum_{i\notin\mtc{I}^v}x_i\right)\nonumber\\
&\qquad -\left(Q_{d^v}(x^v)-L_{d^v}\right)\left(|\mtc{I}^v|-1\right)    \nonumber \\
&\qquad +L_{d^v}-U\left(1-\Ind_{d^v}(x)\right)  & \forall d \in \mtc{D}, x^v \in \mtc{X}_{d}\cap \mtc{K}_{d}. \nonumber
\end{align}

Given assumption A4 that $\mtc{X}=\{0,1\}^{n_1}$, the following result holds.
\begin{lemma}\label{prop:int:finite}
    There exists only finitely many cuts \eqref{eq:fc:binary} and \eqref{eq:ls:int:oc}.
\end{lemma}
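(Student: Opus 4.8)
The plan is to establish finiteness by bounding the number of distinct cuts of each type, exploiting assumption A4 that $\mtc{X}=\{0,1\}^{n_1}$. The central observation is that both cut families \eqref{eq:fc:binary} and \eqref{eq:ls:int:oc} are indexed entirely by the pair $(d^v, x^v)$, where $x^v$ ranges over solutions to \eqref{eq:rmp} and $d^v$ is the distribution it induces. Since $\mtc{X}=\{0,1\}^{n_1}$ is a finite set of cardinality $2^{n_1}$, there are only finitely many candidate values of $x^v$, and hence only finitely many sets $\mtc{I}^v=\{i:x^v_i=1\}$ that can appear as the combinatorial data defining either cut.

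First I would handle the feasibility cuts \eqref{eq:fc:binary}. Each such cut is completely determined by the index set $\mtc{I}^v$, equivalently by the point $x^v\in\{0,1\}^{n_1}$. Since there are at most $2^{n_1}$ such points, there are at most $2^{n_1}$ distinct inequalities of the form \eqref{eq:fc:binary}. Next I would treat the optimality cuts \eqref{eq:ls:int:oc}. Although these carry real-valued coefficients through $Q_{d^v}(x^v)$, $L_{d^v}$, and $U$, the cut is still fully specified by the triple consisting of $x^v$ (which fixes $\mtc{I}^v$ and $|\mtc{I}^v|$), the induced distribution $d^v$ (which fixes $L_{d^v}$ and the scenario data), and the scalar $Q_{d^v}(x^v)=\sum_{s\in\mtc{S}_{d^v}}\pi_{sd^v}Q(x^v,\xi_{sd^v})$, which is itself a deterministic function of $x^v$ and $d^v$. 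Because $d^v$ is uniquely determined by $x^v$ via the partition $(\mtc{X}_d)_{d\in\mtc{D}}$, every coefficient in \eqref{eq:ls:int:oc} is a function of $x^v$ alone. Thus the number of distinct optimality cuts is also bounded by $2^{n_1}$.

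Combining the two bounds, the total number of cuts of the forms \eqref{eq:fc:binary} and \eqref{eq:ls:int:oc} is at most $2^{n_1+1}$, which is finite. I would conclude the argument here, noting that the constant $U$ from A5 is a single fixed scalar and plays no role in the counting.

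I do not anticipate a genuine obstacle, as the result is essentially a counting argument once A4 pins down the feasible first-stage set as finite. The only point requiring a moment of care is verifying that the real-valued coefficients in the optimality cut do not introduce infinitely many distinct inequalities; this is dispatched by observing that each coefficient is a deterministic function of the finitely many first-stage points $x^v$, so distinct cuts correspond injectively to distinct values of $x^v$.
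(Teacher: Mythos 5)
Your proof is correct and follows essentially the same route as the paper: assumption A4 makes $\mtc{X}$ finite with $\vert\mtc{X}\vert\leq 2^{n_1}$, and each cut of either type is fully determined by the solution $x^v$ (with $d^v$ and all real coefficients being deterministic functions of $x^v$ through the partition), so the total number of cuts is finite. The only cosmetic difference is that the paper notes each $x^v$ generates a cut of exactly one type (feasibility if $x^v\notin\mtc{K}_{d^v}$, optimality otherwise), giving the slightly tighter bound $2^{n_1}$ rather than your $2^{n_1+1}$, which is immaterial for finiteness.
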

\begin{proof}
We first observe that $\vert \mtc{X} \vert \leq 2^{n_1}$. 
We next note that there can only be one cut \eqref{eq:ls:int:oc} for each solution in $\mtc{X}$ that does not induce an infeasible second stage problem and one cut \eqref{eq:fc:binary} for each solution in $\mtc{X}$ that does. 
Thus, there can be at most $2^{n_1}$ cuts in total. 
\end{proof}

The analog of \Cref{prop:equivalent_reform} and \Cref{prop:convergence} for the case of second stage subproblems that are MILPs can be proven in a similar manner to those results.  This in turn proves that an L-Shaped method that iteratively adds cuts of the form \eqref{eq:fc:binary} and/or \eqref{eq:ls:int:oc} and does not remove them will ultimately produce an optimal solution to \eqref{eq:speu} as well as a proof of its optimality.

\subsection{Master problem formulations}\label{sec:benders:mpformulations}
An indicator function $\Ind_d(x)$ can be represented in \eqref{eq:rmp} by extending it to include binary variables $\delta_d$, $\forall d \in \mtc{D},$ as well as adding constraints that ensure $x\in \mtc{X}_d\iff \delta_d = 1 .$  This approach may be inconvenient in applications where the number of distributions $\vert\mtc{D}\vert$ is exponential in the size of the problem, as the ones we present in \Cref{sec:comp_setting}. We will now show that, under two sets of general conditions, it is possible to formulate \eqref{eq:rmp} and, consequently, the cuts without indicator functions $\Ind_d(x)$. The first set of conditions is when the $x$ variables can be continuous or integer. The second set of conditions is when the $x$ variables are binary.

Consider the case in which $\mtc{X}=\{x\in K^{n_1}\vert l^i\leq a_i^\top x\leq b^i,i=1,\ldots,m\}$ with $a_i\in\R^{n_1}$, $l^i,b^i\in\R$ and $K\in \{\R,\mathbb{Z}\}$.
In addition, assume that for $i=1,\ldots,m$, there exist constants $l^{ij}$ and $b^{ij}$ for $j=1,\ldots,J$ such that $l^{i1}=l^i$ and $b^{iJ}=b^i$ and $[l^{ij},b^{ij}] \cap [l^{ij'},b^{ij'}] = \emptyset, j,j' \in \{1,\ldots,J\}, j \neq j'.$
That is, each product $a_i^\top x$ may be in at most one out of $J$ disjoint boxes. 
Let variables $v_{ij}\in\{0,1\}, i = 1,\ldots, m$, $j=1,\ldots,J$ indicate in which box the product $a_i^\top x$ falls, that is
$$l^{ij}\leq a_i^\top x\leq b^{ij}\iff v_{ij}=1$$

Relatedly, we let the function $j(i,d):\{1,\ldots,m\}\times \mtc{D}\to \{1,\ldots,J\}$ return the index $j$ such that $\mtc{X}_d=\{l^{i,j(i,d)}\leq a_i^\top x\leq b^{i,j(i,d)}, i=1,\ldots,m\}$.
Then, given the variables $v_{ij}$ and the function $j(i,d)$,  the implication ``$x\in\mtc{X}_d\implies$'' can be enforced by rewriting, e.g., \eqref{eq:continuous:oc}, as follows

$$\mu\geq \sum_{s\in\mtc{S}_d}\pi_{sd}\rho^v_s(h_{sd}-T_{sd}x) - U(m-\sum_{i=1}^mv_{i,j(i,d)})$$

Here, if all the bounds implied by distribution $d$ apply, the last term becomes zero, otherwise it reduces to an upper bound.
This approach allows us to reformulate \eqref{eq:rmp} with $mJ$ binary variables $v$ instead of the $|\mtc{D}|=J^m$ binary variables $\delta_d$.
In a similar manner it is possible to rewrite cuts \eqref{eq:continuous:fc} and \eqref{eq:ls:int:oc}.

Consider now the case in which $\mtc{X}=\{0,1\}^{n_1}$.
Then each $\mtc{X}_d\subseteq\mtc{X}$ is itself a finite collection of points. Recall that the sets $\mtc{X}_d$ form a partition of $\mtc{X}$. Let $0=n^{(0)}<1<n^{(1)}<n^{(2)}<\cdots\leq n^{(T)}=n_1$ be integers that split the decision vector $x$ into $T\geq 1$ segments $(x_{n^{(0)}+1},\ldots,x_{n^{(1)}})$, $(x_{n^{(1)}+1},\ldots,x_{n^{(2)}})$, $\ldots$, $(x_{n^{(T-1)}+1},\ldots,x_{n^{(T)}})$. 
Consider a set $\mtc{K}_t$ of exhaustive conditions that may verify in segment $t$ (see the example in Appendix \ref{sec:app:example}). We have $|\mtc{D}|=\Pi_{t=1}^T|\mtc{K}_t|$ and we let $k(t,d)$ be the condition that must verify on segment $t$ for $x$ to be a member of  $\mtc{X}_d$. 
Then $\mtc{X}_d=\{x\in\{0,1\}^{n_1}|\Ind_{t,k(t,d)}(x)=1, t=1,\ldots,T\}$ where $\Ind_{t,k}(x)$ is an indicator function that takes value one if condition $k$ is verified on segment $t$, zero otherwise.
Then, let $v_{tk}\in\{0,1\}$ take value one if and only if condition $k\in\mtc{K}_t$ on segment $t$ holds. We can rewrite \eqref{eq:continuous:oc} as follows
$$\mu\geq \sum_{s\in\mtc{S}_d}\pi_{sd}\rho^v_s(h_{sd}-T_{sd}x) - U\left(T-\sum_{t=1}^Tv_{t,k(t,d)}\right)$$
In a similar manner it is possible to rewrite cuts \eqref{eq:continuous:fc} and \eqref{eq:ls:int:oc}. Thus, we use $\mtc{O}(T\times \max_{t}|\mtc{K}_t|)$ variables $v_{tk}$ instead of the $|\mtc{D}|=\Pi_{t=1}^T|\mtc{K}_t|$ variables $\delta_d$.

\subsection{A numerical example}
We next present a numerical example to illustrate the steps performed by the proposed method. %that allows us to offer further insights into the method.
Consider the following two-stage stochastic program
  \begin{align*}
    \min~ & x+\sum_{d=1}^2\Ind_d(x)\Ex_{\pr{P}_d}Q(x,\xi)\\
    &x\geq 0
  \end{align*}
  in which the random variable $\xi$ follows one of two discrete probability distributions, $\pr{P}_1$ and $\pr{P}_2$, depending on which of the sets $\mtc{X}_1,\mtc{X}_2$ contains the value of $x.$ These distributions are presented in \Cref{tbl:xi_dists} along with the definitions of the sets $\mtc{X}_1,\mtc{X}_2.$ Each distribution contains exactly two realizations. Relatedly, we  let $\Ind_1(x), \Ind_2(x)$ denote the characteristic functions of $\mtc{X}_1,\mtc{X}_2$, respectively. (Observe, that this problem could be trivially solved by solving two ordinary stochastic programs over $\mtc{X}_1$ and $\mtc{X}_2$, respectively. However, we believe its simplicity is useful in an illustrative example).

\begin{table}[htp]
\caption{Probability distributions for $\xi$.}
\label{tbl:xi_dists}

\centering
\begin{tabular}{cc|cc}
\toprule
\multicolumn{2}{c}{$\pr{P}_1: x \in \mtc{X}_1= [0.5,3]$} & \multicolumn{2}{c}{$\pr{P}_2: x \in \mtc{X}_2= [3.5,10]$}\\
\midrule
$\xi_{1,\cdot}$ & $\pi_{1,\cdot}$ & $\xi_{2,\cdot}$ & $\pi_{2,\cdot}$ \\
\hline
4 & 0.7 & 10 & 0.3 \\ 
12 & 0.3 & 18 & 0.7 \\
\bottomrule
\end{tabular}
\end{table}

Regarding the recourse function, we have
  $$Q(x,\xi)=\min_{y_1,y_2\geq 0} \left\{ y_1+2y_2 \vert y_1+y_2\geq 2+x, y_1\geq \xi-x\right\}$$
In terms of the general notation used in \eqref{eq:sp2s} we have $h = [2 \;\;\; \xi]^\top, T = [-1 \;\;\; 1]^\top$, $W= \begin{bmatrix}
1 & 1 \\
1 & 0 
\end{bmatrix}$ and $q=[1 \;\; 2]^\top$.
One can observe that $Q(x,\xi)$ is convex and piecewise linear. It also admits closed form solution. Namely, 
  $$
  Q(x,\xi)=
  \begin{cases}
    \xi-x,\qquad \text{~if~}x\leq (\xi-2)/2\\
    x+2, \qquad \text{~if~} x> (\xi-2)/2
  \end{cases}
  $$ 
The recourse function $Q(x)=\sum_{d=1}^2\Ind_d(x)\Ex_{\pr{P}_d}Q(x,\xi)$ is depicted in \Cref{fig:Qofx}. 

One can also observe that $Q(x,\xi)$ is a linear program and thus the proposed method will generate valid inequalities of the form $\mu\geq \sum_{s\in\mtc{S}_{d^v}}\pi_{sd^v}(\rho^v_s)^\top(h_{sd^v}-T_{sd^v}x) - U(1-\Ind_{d^v}(x))$ that were presented in \Cref{sec:benders:cont}.  The data elements $\rho_{s}^{v}$ represent extreme points associated with the feasible region of the dual of the second stage problem for scenario $s$ of distribution $d^{v}$, which is as below. 

\begin{align*}
Q(x,\xi) =  \max~&  (2 + x) \rho_{1} + (\xi - x) \rho_{2} \\
  &\rho_1  +  \rho_2 \leq 1 \\
 & \rho_2 \leq 2 \\
 &\rho_1,  \rho_2 \geq 0.
\end{align*}
The value $U$ in these valid inequalities is an upper bound on the greatest difference between the largest and smallest values of the recourse function across the potential distributions. In our case $U=12.5$ is a safe value.

To formulate the RMP we let $\delta_{i}$ denote whether or not $x \in \mtc{X}_{i}$. We observe that the implication of the domain of $x$ implied by the $\delta_{i}$ variables can be modeled with the constraint $0.5\delta_1+3.5\delta_2\leq x \leq 3\delta_1+10\delta_2$. We can thus formulate the initial RMP as follows
\begin{align}
    \min~ & x+\mu \label{rmp:obj}\\
         &0.5\delta_1+3.5\delta_2\leq x \leq 3\delta_1+10\delta_2, \label{rmp:c1}\\
         &\delta_1+\delta_2 =1, \label{rmp:c2}\\
         &\delta_1,\delta_2\in\{0,1\}, \label{rmp:c3}\\
         &\mu \geq 0. \label{rmp:c4}
  \end{align}

\noindent
\textbf{\underline{Iteration 1}}
At this iteration, the solution to RMP is $(x,\mu,\delta_1,\delta_2)=(0.5,0,1,0)$. As $x \in \mtc{X}_1,$ the method solves subproblems associated with distribution $\pr{P}_1.$ We observe that $Q(.5,\xi_{1,1})$ and $Q(.5,\xi_{1,2})$ have the same optimal solution to their duals, which is $(\rho_1,\rho_2)=(0,1).$ The cut generated is 
\begin{equation}\label{rmp:c5}\mu \geq 6.4 -x - 12.5(1-\delta_1)\end{equation}
which is depicted in \Cref{fig:cut1}.

\begin{figure}
% \centering
   \begin{subfigure}[b]{.5\textwidth}
    \includegraphics[width=\textwidth]{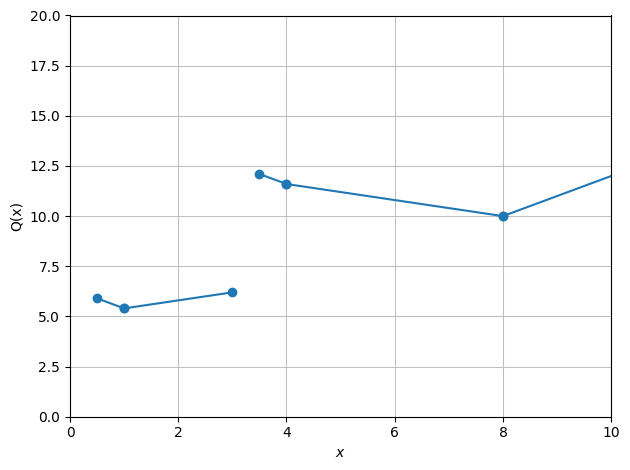}
    \caption{$Q(x)=\sum_{d=1}^2\Ind_d(x)\Ex_{\pr{P}_d}Q(x,\xi)$}
    \label{fig:Qofx}
    \end{subfigure}
\begin{subfigure}[b]{.5\textwidth}
   % \centering
    \includegraphics[width=\textwidth]{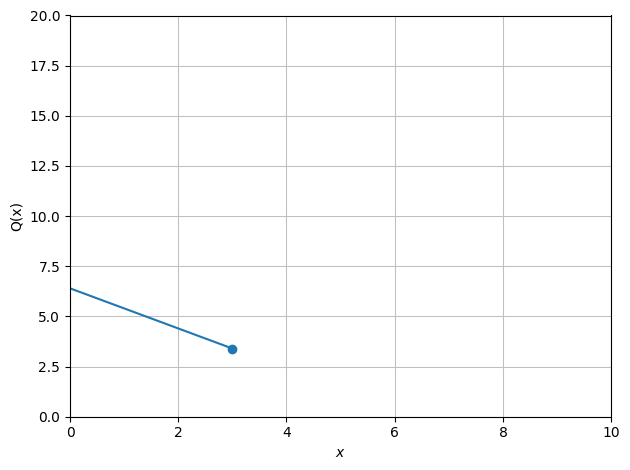}
    \caption{Optimality cuts after  first iteration}
    \label{fig:cut1}
\end{subfigure}
\begin{subfigure}[b]{.5\textwidth}
%    \centering
    \includegraphics[width=\textwidth]{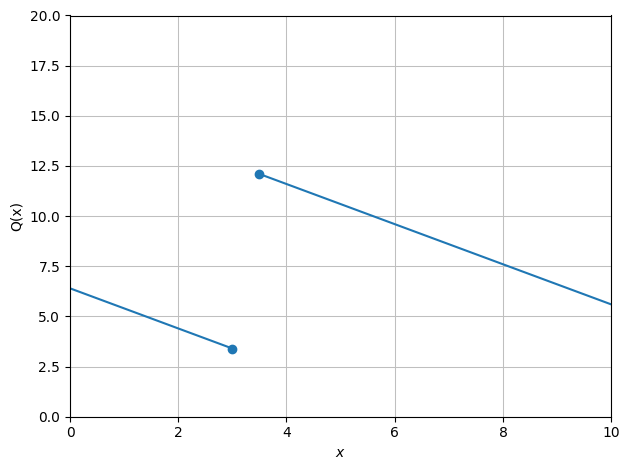}
    \caption{Optimality cuts after  second iteration}
    \label{fig:cut2}
\end{subfigure}
\begin{subfigure}[b]{.5\textwidth}
    \includegraphics[width=\textwidth]{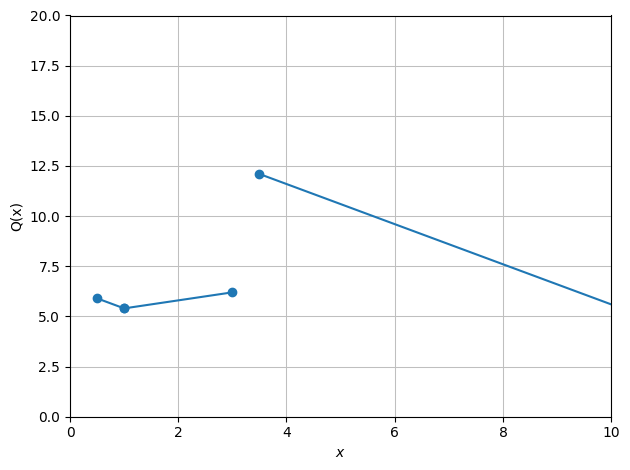}
    \caption{Optimality cuts after  third iteration}
    \label{fig:cut3}
 \end{subfigure}
 \caption{Recourse function and cuts added.}
 \label{fig:rec_cuts}
\end{figure}

\noindent
\textbf{\underline{Iteration 2:}}
At this iteration the method solves the RMP \eqref{rmp:obj}-\eqref{rmp:c4} along with the constraint \eqref{rmp:c5} to find the optimal solution $(x,\mu,\delta_1,\delta_2)=(3.5,0,0,1)$. As $x \in \mtc{X}_2$, the method solves subproblems associated with distribution $\pr{P}_2.$ We observe that $Q(3.5,\xi_{2,1})$ and $Q(3.5,\xi_{2,2})$ have the same optimal solution to their duals, which is $(\rho_1,\rho_2)=(0,1).$ The optimality cut generated is
\begin{equation}\label{eq:c6}
    \mu \geq 15.6 - x - 12.5(1-\delta_2)
\end{equation}
which is depicted in \Cref{fig:cut2}.

\noindent
\textbf{\underline{Iteration 3:}}
At this iteration the method solves the RMP \eqref{rmp:obj}-\eqref{rmp:c5} along with \eqref{eq:c6} to find the optimal solution  $(x,\mu,\delta_1,\delta_2)=(3,3.4,1,0)$. Given that $x \in \mtc{X}_1,$ the method solves the subproblems $Q(3,\xi_{1,1})$ and $Q(3,\xi_{1,2})$ associated with distribution $\pr{P}_1.$ The optimal dual solutions are $(\rho_1,\rho_2)=(1,0)$ for the first scenario and $(\rho_1,\rho_2)=(0,1)$ for the second. The optimality cut is
\begin{equation}
\mu \geq 5 + .4x - 12.5(1-\delta_1), \label{eq:c7}
\end{equation}
 which is depicted in \Cref{fig:cut3}.

\noindent
\textbf{\underline{Iteration 4:}}
At this iteration the method solves the RMP \eqref{rmp:obj}-\eqref{eq:c6} along with \eqref{eq:c7} to find the optimal solution  $(x,\mu,\delta_1,\delta_2)=(1,5.4,1,0)$. By evaluating the subproblems associated with distribution $\pr{P}_1$ the method can conclude that this solution is optimal.

In this example, the method converged having generated only one inequality to approximate $\Ex_{\pr{P}_2}Q(x,\xi)$. In other words, with only one optimality cut the method was able to rule out the possibility of $\mtc{X}_2$ containing the optimal solution. As we will see in \Cref{sec:comp_study}, this behavior is not unique to this example. Namely, we will see that the method often generates fewer than two inequalities per distribution.

\subsection{Distribution-independent valid inequalities}
\label{subsec:dist_ind_opt}
The proposed extension of the L-Shaped method requires adding to the master problem distribution-specific inequalities.
We next present a class of distribution-independent valid inequalities. These are, in general, not tight and do not guarantee convergence when used alone.
However, they may provide non-trivial lower bounds that facilitate convergence.

Recall that the recourse function of \eqref{eq:speu} is defined as $$Q(x)=\sum_{d\in\mtc{D}}\Ind_d(x)\Ex_{\pr{P}_d}Q(x,\xi_{sd}).$$
The following proposition paves the way to a set of distribution-independent valid inequalities.

\begin{proposition}\label{prop:lpbound}
  For all $x\in\mtc{X}$ it holds that
  $$Q(x)\geq \sum_{d\in\mtc{D}}\sum_{s\in\mtc{S}_d}\pi_{sd}L^{d}(x,\xi_{sd})$$
  where, for given $d$ and $s\in \mtc{S}_d$, $L^{d}(x,\xi_{sd})$ is the linear programming relaxation to the following integer programming problem
  \begin{align*}Q^{d}(x,\xi_{sd})=&\min_{y\in\mtc{Y},\tau\geq 0,\delta\in\{0,1\}}q_{sd}^\top \tau\\
                                  & W_{sd}y=h_{sd}-T_{sd}x\\
                                  &\delta =1\iff x\in\mtc{X}_d\\
                                  &\tau-y \leq 0\\
                                  &\tau-Y\delta\leq 0\\
                                  &\tau-y-Y\delta\geq -Y
  \end{align*}
  where $Y\in \R^{n_2}$ is such that $Y\geq y$ (element-wise) for all $y\in\mtc{Y}$.
\end{proposition}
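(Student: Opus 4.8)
The plan is to fix an arbitrary $x\in\mtc{X}$ and exploit that, since $(\mtc{X}_d)_{d\in\mtc{D}}$ partitions $\mtc{X}$, there is a unique index $d(x)\in\mtc{D}$ with $\Ind_{d(x)}(x)=1$. Consequently the left-hand side collapses to a single distribution, $Q(x)=\sum_{s\in\mtc{S}_{d(x)}}\pi_{s\,d(x)}Q(x,\xi_{s\,d(x)})$, and the whole argument reduces to evaluating each auxiliary program $Q^{d}(x,\xi_{sd})$ and then passing to its relaxation $L^{d}(x,\xi_{sd})$.

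First I would verify that the three $\tau$-constraints are the standard linearization of the component-wise product $\tau=\delta\,y$. Since $x$ is fixed, the biconditional $\delta=1\iff x\in\mtc{X}_d$ pins $\delta=\Ind_d(x)$ in both $Q^{d}$ and its relaxation $L^{d}$. If $\delta=1$, the inequalities $\tau\le y$, $\tau\le Y$ and $\tau\ge y$ force $\tau=y$ (using $y\le Y$ element-wise); if $\delta=0$, then $\tau\le 0$ together with $\tau\ge 0$ forces $\tau=0$ (the remaining constraint $\tau\ge y-Y$ being slack because $y\le Y$). Hence the objective $q_{sd}^\top\tau$ equals $\Ind_d(x)\,q_{sd}^\top y$, so minimizing over $y$ yields $Q^{d}(x,\xi_{sd})=Q(x,\xi_{sd})$ when $x\in\mtc{X}_d$ and $Q^{d}(x,\xi_{sd})=0$ when $x\notin\mtc{X}_d$.

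Next, because $L^{d}(x,\xi_{sd})$ is the linear programming relaxation of $Q^{d}(x,\xi_{sd})$ (dropping the integrality of $y$), we have $L^{d}(x,\xi_{sd})\le Q^{d}(x,\xi_{sd})$ for every $d$ and $s$. As each $\pi_{sd}\ge 0$, summing these inequalities and substituting the evaluation from the previous step gives
$$\sum_{d\in\mtc{D}}\sum_{s\in\mtc{S}_d}\pi_{sd}L^{d}(x,\xi_{sd})\le \sum_{d\in\mtc{D}}\sum_{s\in\mtc{S}_d}\pi_{sd}Q^{d}(x,\xi_{sd})=\sum_{s\in\mtc{S}_{d(x)}}\pi_{s\,d(x)}Q(x,\xi_{s\,d(x)})=Q(x),$$
where every term with $d\ne d(x)$ vanishes. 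This is precisely the claimed bound.

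The main obstacle is feasibility of the auxiliary programs for the inactive distributions $d\ne d(x)$: although their objective collapses to $0$, the equality constraints $W_{sd}y=h_{sd}-T_{sd}x$ must still admit a continuous solution, otherwise $L^{d}(x,\xi_{sd})=+\infty$ and the inequality becomes vacuous. I would therefore run the argument under the (relatively) complete recourse condition $\mtc{X}\subseteq\bigcap_{d\in\mtc{D}}\mtc{K}_d$ introduced earlier, which guarantees each continuous relaxation is feasible and hence each $L^{d}(x,\xi_{sd})$ is finite. A secondary point worth double-checking is that the relaxation for $d=d(x)$ genuinely lower-bounds $Q(x,\xi_{s\,d(x)})$; this is immediate, since relaxing the integrality of $y$ only enlarges the feasible set and can only decrease the optimal value.
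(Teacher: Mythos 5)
Your proof is correct and rests on the same core identity as the paper's: $\sum_{d\in\mtc{D}}\sum_{s\in\mtc{S}_d}\pi_{sd}Q^{d}(x,\xi_{sd})=Q(x)$, followed by the observation that passing to an LP relaxation can only decrease each optimal value. The difference is one of direction: the paper \emph{constructs} the $\tau$-formulation from $Q(x)$ (moving $\Ind_d(x)$ into the objective by homogeneity, replacing it with the binary $\delta$, then linearizing the bilinear terms $y\delta$ via McCormick envelopes), whereas you take the formulation as given and \emph{verify} by case analysis on $\delta$ that it evaluates to $Q(x,\xi_{sd})$ when $x\in\mtc{X}_d$ and to $0$ otherwise; these are the same argument read in opposite directions. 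Where you genuinely add something is the feasibility caveat. The paper's homogeneity step silently equates $\Ind_d(x)\cdot\min_y\{q_{sd}^\top y \mid W_{sd}y=h_{sd}-T_{sd}x\}$ with $\min_y\{\Ind_d(x)\,q_{sd}^\top y \mid W_{sd}y=h_{sd}-T_{sd}x\}$, which breaks down when $\Ind_d(x)=0$ and the inactive subproblem is infeasible: the left side is $0\cdot\infty=0$ under the usual convention while the right side is $+\infty$ --- and infeasibility under inactive distributions is explicitly tolerated in the paper's problem statement. As you observe, in that case the right-hand side of the proposition becomes $+\infty$ and the claimed inequality fails, so an assumption such as $\mtc{X}\subseteq\bigcap_{d\in\mtc{D}}\mtc{K}_d$ (relatively complete recourse across all distributions, which does hold for the paper's computational test problems) is needed; your explicit statement of this hypothesis closes a gap that the paper's own proof leaves open.
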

  \begin{proof}

    Under assumption A1 we have 
    $$Q(x)=\sum_{d\in\mtc{D}}\Ind_d(x)\sum_{s\in\mtc{S}_d}\pi_{sd}Q(x,\xi_{sd})=\sum_{d\in\mtc{D}}\Ind_d(x)\sum_{s\in\mtc{S}_d}\pi_{sd}\min_{y\in\mtc{Y}}\{q_{sd}^\top y\vert W_{sd}y=h_{sd}-T_{sd}x\}$$
    Having $\Ind_d(x)\geq 0$ for all $x$ and $d\in\mtc{D}$, by homogeneity we can move it inside the minimization problem obtaining
    \begin{align}
      Q(x)=\sum_{d\in\mtc{D}}\sum_{s\in\mtc{S}_d}\pi_{sd}\min_{y\in\mtc{Y}}\{q_{sd}^\top y\Ind_d(x)\vert W_{sd}y=h_{sd}-T_{sd}x\}
    \end{align}
    Let us refer to the new minimization problem as $Q^{d}(x,\xi_{sd})$.
    Inside each $Q^{d}(x,\xi_{sd})$ the $\Ind_d(x)$ can be implemented by a binary variable $\delta$ such that $\delta=1\iff x\in\mtc{X}_d$.
    We obtain
    \begin{align}Q^{d}(x,\xi_{sd})=\min_{y\in\mtc{Y},\delta\in\{0,1\}}&\left\{q_{sd}^\top y\delta\left\vert W_{sd}y=h_{sd}-T_{sd}x, \delta =1\iff x\in\mtc{X}_d\right.\right\}
    \end{align}
    Observe that $\delta$ is determined by $x$.

    The bilinear terms in the objective can be replaced by their McCormick envelopes \cite{Mcc76}. We obtain
    \begin{align}Q^{d}(x,\xi_{sd})=&\min_{y\in\mtc{Y},\tau\geq 0,\delta\in\{0,1\}}q_{sd}^\top \tau\\
                                                   & W_{sd}y=h_{sd}-T_{sd}x\\
                                                   &\delta =1\iff x\in\mtc{X}_d\\
                                                   &\tau-y \leq 0\\
                                                   &\tau-Y\delta\leq 0\\
                                                   &\tau-y-Y\delta\geq -Y
    \end{align}
    It follows that, for all $x\in\mtc{X}$
    $$Q(x)=\sum_{d\in\mtc{D}}\sum_{s\in\mtc{S}_d}\pi_{sd}Q^{d}(x,\xi_{sd})\geq \sum_{d\in\mtc{D}}\sum_{s\in\mtc{S}_d}\pi_{sd}L^{d}(x,\xi_{sd})$$
    where the inequality holds because $Q^{d}(x,\xi_{sd})\geq L^{d}(x,\xi_{sd})$ for all $x$, $d$ and $s$. %\qed
%    This completes the proof.
  \end{proof}
%\end{proposition}

Next, let $\nabla^v_{sd}$ be a subgradient to $L^d(x,\xi_{sd})$ at $x^v$. \Cref{prop:lpbound} and convexity of $L^d(x,\xi_{sd})$ in $x$ suggest adding cuts 
\begin{equation}\label{eq:ext:vi1}
  \mu\geq \sum_{d\in\mtc{D}}\sum_{s\in\mtc{S}_d}\pi_{sd}L^d(x^v,\xi_{sd})+(\nabla^v_{sd})^\top(x-x^v)
\end{equation}
generated from $L^{d}(\cdot,\cdot)$ upon visiting solutions $x^v$ to \eqref{eq:rmp}.
These cuts support the recourse function from below and create a convex lower envelop to $Q(x).$ %which might provide non-trivial lower bounds across multiple distributions.

Observe that cuts \eqref{eq:ext:vi1}, while general, require solving one linear program for each $d$ and for each $s$.
Under specific assumptions, we can generate distribution-independent inequalities by solving one subproblem per distribution.
\begin{proposition}\label{prop:evproblem}
  Assume $Q(x,\xi)$, hence $L^d(x,\xi)$, are convex in $\xi$. Then, for all $x\in\mtc{X}$
$$Q(x)\geq \sum_{d\in\mtc{D}}L^{d}(x,\Ex_{\pr{P}_d}[\xi])$$
\end{proposition}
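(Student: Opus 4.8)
The plan is to obtain the claim by chaining \Cref{prop:lpbound} with a single application of Jensen's inequality, carried out distribution by distribution. First I would invoke \Cref{prop:lpbound}, which already furnishes, for every $x\in\mtc{X}$, the lower bound
\begin{equation*}
  Q(x)\geq \sum_{d\in\mtc{D}}\sum_{s\in\mtc{S}_d}\pi_{sd}L^{d}(x,\xi_{sd}).
\end{equation*}
So the entire task reduces to bounding the inner sum from below by $L^{d}(x,\Ex_{\pr{P}_d}[\xi])$ for each $d\in\mtc{D}$.

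Next I would observe that, under assumption A1, the distribution $\pr{P}_d$ is the discrete measure placing mass $\pi_{sd}$ on $\xi_{sd}$, so for fixed $x$ and $d$ the inner sum is exactly the expectation of the map $\xi\mapsto L^{d}(x,\xi)$ under $\pr{P}_d$; that is, $\sum_{s\in\mtc{S}_d}\pi_{sd}L^{d}(x,\xi_{sd})=\Ex_{\pr{P}_d}[L^{d}(x,\xi)]$. Since $L^{d}(x,\cdot)$ is convex in $\xi$ by hypothesis, Jensen's inequality applied to this finite convex combination yields $\Ex_{\pr{P}_d}[L^{d}(x,\xi)]\geq L^{d}\!\left(x,\Ex_{\pr{P}_d}[\xi]\right)$. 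Summing this over all $d\in\mtc{D}$ and substituting into the bound from \Cref{prop:lpbound} delivers the stated inequality.

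There is no substantial obstacle here; the only point deserving care is verifying that Jensen's inequality is legitimately applicable, namely that $\Ex_{\pr{P}_d}[\xi]$ lies in the region where $L^{d}(x,\cdot)$ is convex. This is automatic, because $\Ex_{\pr{P}_d}[\xi]=\sum_{s\in\mtc{S}_d}\pi_{sd}\xi_{sd}$ is a convex combination of the support points $\xi_{sd}$ and hence lies in their convex hull, on which convexity is assumed to hold; integrability is likewise immediate from the finiteness of $\mtc{S}_d$. I would therefore keep the write-up short, stating the identification of the inner sum as an expectation, invoking Jensen's inequality explicitly, and closing by summation over $d$.
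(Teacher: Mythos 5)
Your proposal is correct and takes essentially the same route as the paper: the paper's one-line proof (``the result follows immediately from Jensen's inequality'') is precisely the chain you spell out, namely the bound from \Cref{prop:lpbound} followed by Jensen's inequality applied, distribution by distribution, to the convex map $\xi\mapsto L^{d}(x,\xi)$ under the discrete measure $\pr{P}_d$. Your additional remarks on the legitimacy of Jensen (finite support, expectation lying in the convex hull) merely make explicit what the paper leaves implicit.
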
  
\begin{proof}
  The result follows immediately from Jensen's inequality.
  \end{proof}  

This result allows us to solve only one subproblem per distribution instead of $\vert \mtc{S}_d\vert$.
Hence, \eqref{eq:ext:vi1} reduces to
\begin{equation}\label{eq:ext:vi1conv}
  \mu\geq \sum_{d\in\mtc{D}}L^d(x^v,\Ex_{\pr{P}_d}[\xi])+(\nabla^v_{d})^\top(x-x^v)
\end{equation}
where $\nabla^v_{d}$ is a subgradient to $L^d(x,\Ex_{\pr{P}_d}[\xi])$ at $x^v$.

Another opportunity for generating distribution-independent cuts involves examining the scenarios themselves to see if they meet certain criteria. 
\begin{proposition}\label{prop:lowerenvelop}
  Assume there exists $\hat{\xi}=(\hat{q},\hat{W},\hat{T},\hat{h})$ such that $Q(x,\hat{\xi})\leq \Ex_{\pr{P}_d}Q(x,\xi)$ for all $x\in\mtc{X}$ and $d\in\mtc{D}$, where $Q(x,\xi)$ is a linear program.
  Let $\rho^v$ be an optimal dual solution to $Q(x^v,\hat{\xi})$ for some $x^v\in\mtc{X}$. Then
  $$\Ex_{\pr{P}_{d^l}}[Q(x^l,\xi)]\geq (\rho^v)^\top(\hat{h}-\hat{T}x^l)$$
  for all $x^l\in \mtc{X}$.
  \end{proposition}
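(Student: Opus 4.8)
The plan is to chain together weak linear-programming duality for the reference scenario $\hat{\xi}$ with the dominance hypothesis $Q(x,\hat{\xi})\leq\Ex_{\pr{P}_d}[Q(x,\xi)]$. The crucial structural observation is that, writing the reference subproblem as $Q(x,\hat{\xi})=\min_{y\geq 0}\{\hat{q}^\top y:\hat{W}y=\hat{h}-\hat{T}x\}$, its dual is $\max_{\rho}\{\rho^\top(\hat{h}-\hat{T}x):\rho^\top\hat{W}\leq\hat{q}\}$, whose feasible region $\{\rho:\rho^\top\hat{W}\leq\hat{q}\}$ does not depend on $x$. The first-stage decision enters only through the objective, via the right-hand side $\hat{h}-\hat{T}x$. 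This $x$-independence of the dual feasible set is the entire engine of the proof, since it lets a dual vector certified at one point transfer to every other point.

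First I would observe that, as $\rho^v$ is an optimal and hence feasible dual solution for $Q(x^v,\hat{\xi})$, it satisfies $(\rho^v)^\top\hat{W}\leq\hat{q}$ and is therefore dual-feasible for $Q(x^l,\hat{\xi})$ as well, for every $x^l\in\mtc{X}$. Weak duality applied at $x^l$ then yields $(\rho^v)^\top(\hat{h}-\hat{T}x^l)\leq Q(x^l,\hat{\xi})$.

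Next I would invoke the dominance hypothesis. Since it is quantified over all $d\in\mtc{D}$, it holds in particular for $d=d^l$, the distribution enforced by $x^l$, giving $Q(x^l,\hat{\xi})\leq\Ex_{\pr{P}_{d^l}}[Q(x^l,\xi)]$. Chaining the two inequalities delivers exactly $\Ex_{\pr{P}_{d^l}}[Q(x^l,\xi)]\geq(\rho^v)^\top(\hat{h}-\hat{T}x^l)$, as claimed.

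There is essentially no hard computational step here; the only two points requiring care are the ones just isolated, namely that the dual feasible set is $x$-independent (so that $\rho^v$ remains feasible away from $x^v$) and that the hypothesis applies to $d^l$. The conceptual payoff worth emphasizing is that a single dual vector $\rho^v$ produces a lower bound on expected recourse cost that is valid simultaneously for all $x^l\in\mtc{X}$ and for whichever distribution each such $x^l$ enforces; this simultaneity is precisely what makes the resulting inequality distribution-independent.
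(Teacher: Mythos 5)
Your proof is correct and follows essentially the same route as the paper's: both rest on the fact that the dual feasible region of $Q(\cdot,\hat{\xi})$ is independent of $x$, so $\rho^v$ remains dual feasible at $x^l$, and then apply the dominance hypothesis for $d^l$. The only cosmetic difference is that you invoke weak duality directly, whereas the paper writes $Q(x^l,\hat{\xi})=(\rho^l)^\top(\hat{h}-\hat{T}x^l)\geq(\rho^v)^\top(\hat{h}-\hat{T}x^l)$ via strong duality at $x^l$ plus dual feasibility of $\rho^v$; these are equivalent.
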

  \begin{proof}
    Consider $x^l\in\mtc{X}$ which yields $d^l$.
    \begin{align*}
      \Ex_{\pr{P}_{d^l}}[Q(x^l,\xi)]=& \sum_{s\in\mtc{S}_{d^l}}\pi_{sd^l}Q(x^l,\xi_{sd^l})\\
      \geq &Q(x^l,\hat{\xi})=(\rho^l)^\top(\hat{h}-\hat{T}x^l) \geq (\rho^v)^\top(\hat{h}-\hat{T}x^l)
    \end{align*}
    where $\rho^l$ and $\rho^v$ are, respectively, optimal and feasible to the dual of $Q(x^l,\hat{\xi})$. 
  \end{proof}  

\Cref{prop:lowerenvelop} justifies adding valid inequality
\begin{equation}\label{eq:ext:vi2}\mu \geq (\rho^v)^\top(\hat{h}-\hat{T}x)
\end{equation}
from $x^v$, if the optimality test fails.
Compared to \Cref{prop:lpbound}, \Cref{prop:lowerenvelop} requires solving only one additional linear program at every iteration.
However, it requires the existence and identification of a scenario $\hat{\xi}$, which might not be trivial. Next, we describe a special case where this task is simplified.
\begin{definition}
  For all $x\in \mtc{X}$ the function $Q(x,\xi):\R^N\to \R$ is said to be \textit{monotone} non-decreasing/non-increasing if for all $i=1,\ldots,N$, given and fixed $\xi^j=\hat{\xi}^j\in \R$ for $j\in\{1,\ldots,N\}\setminus\{i\}$, the functions $Q(x,(\hat{\xi}^1,\ldots,\hat{\xi}^{i-1},\xi^i,\hat{\xi}^{i+1},\ldots,\hat{\xi}^{N})):\R\to \R$ are monotone non-decreasing/non-increasing.
\end{definition}

\begin{proposition}\label{prop:min_ev}
  Assume $Q(x,\xi)$ is convex and monotone non-decreasing in $\xi$ for all $x\in\mtc{X}$.
  Consider $\hat{\xi}=\min_{d\in\mtc{D}}\Ex_{\pr{P}_d}[\xi]$ where the minimum is taken component-wise.
  Then $\hat{\xi}$ satisfies the definition in \Cref{prop:lowerenvelop}.
\end{proposition}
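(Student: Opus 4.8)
The plan is to verify the two requirements imposed by \Cref{prop:lowerenvelop}: first, that $Q(x,\hat{\xi}) \leq \Ex_{\pr{P}_d}[Q(x,\xi)]$ for all $x \in \mtc{X}$ and all $d \in \mtc{D}$, and second, that $\hat{\xi}$ is a legitimate realization on which $Q(x,\cdot)$ is a well-defined linear program. Since $\hat{\xi}$ is built component-wise as the minimum over $d$ of the expectations $\Ex_{\pr{P}_d}[\xi]$, it inherits the structural form of the data (it is a convex combination, hence an admissible right-hand side / cost / constraint realization), so the second requirement is essentially immediate and the real content lies in the inequality.

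To establish the inequality, I would fix an arbitrary $x \in \mtc{X}$ and an arbitrary $d \in \mtc{D}$ and chain two comparisons. First, by Jensen's inequality applied to the convex function $\xi \mapsto Q(x,\xi)$, we have
$$
\Ex_{\pr{P}_d}[Q(x,\xi)] \geq Q\big(x,\Ex_{\pr{P}_d}[\xi]\big).
$$
Second, by the monotone non-decreasing assumption, since $\hat{\xi} = \min_{d' \in \mtc{D}} \Ex_{\pr{P}_{d'}}[\xi] \leq \Ex_{\pr{P}_d}[\xi]$ component-wise, we obtain
$$
Q\big(x,\Ex_{\pr{P}_d}[\xi]\big) \geq Q(x,\hat{\xi}).
$$
Combining these gives $\Ex_{\pr{P}_d}[Q(x,\xi)] \geq Q(x,\hat{\xi})$, which is exactly the defining condition required in \Cref{prop:lowerenvelop}. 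Since $x$ and $d$ were arbitrary, the condition holds for all $x \in \mtc{X}$ and $d \in \mtc{D}$.

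The main subtlety, and where I would be most careful, is the application of the monotonicity definition: the definition as stated concerns varying \emph{one} coordinate at a time with the others held fixed, whereas here I need to compare $Q$ at two vector arguments that may differ in several coordinates simultaneously. I would resolve this by a telescoping argument — passing from $\Ex_{\pr{P}_d}[\xi]$ to $\hat{\xi}$ one coordinate at a time, lowering each component $i$ from $(\Ex_{\pr{P}_d}[\xi])^i$ down to $\hat{\xi}^i \leq (\Ex_{\pr{P}_d}[\xi])^i$ and invoking the single-coordinate non-decreasing property at each step, so that $Q$ does not increase along the entire path. One should also confirm that each intermediate point remains an admissible realization so that $Q$ is finite there; under the boundedness assumptions already in force this is not an issue, but it is the step that warrants explicit mention rather than being taken for granted.
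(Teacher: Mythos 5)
Your proof is correct and follows essentially the same route as the paper: Jensen's inequality to pass from $\Ex_{\pr{P}_d}[Q(x,\xi)]$ to $Q(x,\Ex_{\pr{P}_d}[\xi])$, then monotonicity together with $\hat{\xi}\leq\Ex_{\pr{P}_d}[\xi]$ to descend to $Q(x,\hat{\xi})$. Your telescoping remark — reducing the vector comparison to single-coordinate applications of the monotonicity definition — is a detail the paper's one-line proof leaves implicit, and it is a legitimate and correct way to make that step rigorous.
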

  \begin{proof}
    Let $x^v\in \mtc{X}$ be arbitrary and $d^v$ the corresponding distribution.
    Then
    $$\Ex_{\pr{P}_d}[Q(x^v,\xi)]\geq Q(x^v,\Ex_{\pr{P}_d}[\xi])\geq Q(x^v,\hat{\xi})$$
    Where the first inequality holds by by convexity and Jensen's inequality and the second by monotonicity and by definition of $\hat{\xi}$.
  \end{proof}

\section{Application of proposed method to production planning}
\label{sec:comp_setting}

To assess the computational effectiveness of the proposed method, we apply it to three variants of a production planning problem under both endogenous and exogeneous uncertainty. While all three variants possess the properties required by the proposed framework, each presents a different computational challenge with respect to the solution of the second stage subproblem. We note that many of the concepts and notation are common across the three variants. We introduce these when introducing the first variant. When presenting the second and third variants we only discuss how they differ from the first.

\subsection{Production Planning Problem - Variant 1}
\label{sec:model}
 A manufacturer can produce a product at one or more facilities $\mathcal{F}$. 
 When there is demand for a unit, it can be sold at price $P.$ When there is not, it can be sold at price $O < P$.
 The manufacturer seeks to determine how much to make at each facility $f \in \mathcal{F}$, given a per unit cost of production, $C_{f}.$
 
 For each facility $f$, the manufacturer chooses a production level $l$, from a set $\mathcal{L}_{f}$, that defines a production volume range $[L_{fl},U_{fl}]$. 
 We presume that the set $\mathcal{L}_{f}$ includes a level $l$ such that $L_{fl}=0$. Furthermore, we assume that for all $f\in\mtc{F}$ we have $[L_{f,l_1},U_{f,l_1}]\cap [L_{f,l_2},U_{f,l_2}]=\emptyset$ for all $l_1,l_2\in\mtc{L}_f$.

When planning production, the yield at each facility, and the cumulative demand of finished goods for the product are uncertain. 
We let $\rv{\xi}=(\mathbf{Y}_{f_1},\ldots,\mathbf{Y}_{f_{|\mathcal{F}|}},\mathbf{D})$ denote an $\R^{|\mtc{F}|+1}$ random vector representing the uncertain yield at the different facilities, $\mathbf{Y}_{f}$, and the cumulative demand, $\mathbf{D}$. 
The yield at a given facility can also depend on the production levels chosen.

We let $\mtc{D}$ denote the set of joint distributions of yields and demand. However, as uncertainty in demand is exogenous, its marginal distribution  is identical for all distributions in $\mtc{D}.$ %as it is exogenous. 
For each facility $f$, and distribution $\mtc{D}$, there is a map $l(f,d):\mtc{F}\times \mtc{D}\to \cup_{f\in\mtc{F}}\mtc{L}_f$ that indicates the production level at facility $f$ required for distribution $d \in \mtc{D}$. 
It is necessary that the production level indicated by the map $l(f,d)$ is chosen for each facility $f$ for probability distribution $d\in\mtc{D}$ to accurately represent uncertainty in yields. Observe that the resulting number of distributions is $\vert \mathcal{D} \vert =  \Pi_{f\in\mtc{F}}\vert  \mathcal{L}_f \vert$.

We further assume that each probability distribution is either discrete or can be adequately represented by a finite number of scenarios. 
We let $\mtc{S}_{d}$ represent the set of scenarios in distribution $d \in \mtc{D}$. 
Associated with scenario $s \in \mtc{S}_{d}$ for distribution $d \in \mtc{D}$ we let $\pi_{sd}$ denote the probability that it occurs. Regarding yields at facilities, we let $Y_{fsd}$ represent the realization of the yield at facility $f$ under scenario $s\in\mtc{S}_{pd}$ of distribution $d\in\mtc{D}$. We let $D_{sd}$ represent the realization of the demand  under scenario $s\in\mtc{S}_{d}$ of distribution $d\in\mtc{D}$.

To model the problem under consideration, we let $x_{f}\geq 0$ represent the amount of raw materials, quoted in terms of finished goods, allocated to facility $f \in \mathcal{F}$. We let binary variable $y_{fl}$ equal $1$ if production level $l \in \mathcal{L}_{fp}$ is chosen at facility $f \in \mathcal{F}$ and $0$ otherwise.  The binary decision variable $\delta_{d}$ represents whether distribution $d \in \mathcal{D}$ should be used to describe yield uncertainties. Decision variables $x:=(x_{f})_{f\in\mathcal{F}}$, $y:=(y_{fl})_{f\in\mathcal{F},l \in \mathcal{L}_{f}}$ and $\delta:=(\delta_{d})_{d\in\mathcal{D}}$ are first stage decision variables.

\begin{subequations}\label{eq:ppp1}
\begin{align}
\label{eq:model1:obj}\min~&\sum_{f\in\mathcal{F}}C_{f}x_{f}-\sum_{d\in\mathcal{D}}\delta_{d}\Bigg[\sum_{s\in\mathcal{S}_{d}}\pi_{sd}Q(x,\xi_{sd})\Bigg] \\
\label{eq:model1:c1}\text{s.t. ~} &   \sum_{l\in\mathcal{L}_{f}}y_{fl} =1&\forall f\in\mathcal{F},  \\
\label{eq:model1:c2} &   \sum_{l\in\mathcal{L}_{f}}L_{fl}y_{fl} \leq x_{f}\leq \sum_{l\in\mathcal{L}_{f}}U_{fl}y_{fl}&\forall f\in\mathcal{F}, \\ 
\label{eq:model1:c4} &\sum_{f\in\mathcal{F}}y_{f,l(f,d)}\geq |\mathcal{F}|\delta_{d} &\forall d\in \mathcal{D}, \\
\label{eq:model1:fs_dvx} & x_{f} \geq 0  &\forall f\in\mathcal{F}, \\ 
\label{eq:model1:fs_dvy} & y_{fl} \in \{0,1\} &\forall f\in\mathcal{F}\, l \in L_{f}, \\
\label{eq:model1:fs_dvd} & \delta_{d} \in \{0,1\} &\forall d \in \mathcal{D}.
\end{align}
\end{subequations}

The objective \eqref{eq:model1:obj} seeks to maximize expected profit. Here, $Q(x,\xi_{sd})$ represents the revenue earned by selling inventory under realization $\xi_{sd}$ of yield and demand. 
Constraints \eqref{eq:model1:c1} ensure that a production level is chosen for each product at each facility. 
Constraints \eqref{eq:model1:c2} ensure that the amount allocated to a facility falls within the bounds defined by the chosen production level. Constraints \eqref{eq:model1:c4} ensure that the choice of production levels enforces the appropriate distribution. 

Once yield and demand information have materialized through a scenario $s\in\mathcal{S}_{d}$ for the distribution $d\in\mathcal{D}$ indicated by $\delta_{d}=1$, the decision maker makes second stage decisions concerning sales. 
The continuous variable $w$ defines the amount of product that is sold at ``full'' price $P$ while the continuous variable $o$ represents the amount of inventory that is sold at the discounted price, $O.$ For $d\in\mtc{D}$ and $s\in\mtc{S}_d$ the second stage decision problem is as follows.

\begin{align}    \label{eq:ppp1:sp}
Q(x,\xi_{sd}) = \max_{w, o \geq 0}\left\{P w+Oo\left\vert w + o= \sum_{f\in\mathcal{F}}Y_{fsd}x_{f}, w\leq D_{sd}\right.\right\}&
\end{align}

The constraints limit the amount of inventory that is sold by how much is available given the yields. Furthermore, the amount of inventory that is sold at full price is limited by by the demand realization.

\subsection{Production Planning Problem with Per-unit Transportation Costs - Variant 2}
\label{subsec:pp_pu_trans_costs}
This problem is like the one just discussed, except that demand for finished goods occurs in different locations. 
Thus, for this variant, the first stage decisions are the same as in problem \eqref{eq:ppp1}. However, the second stage problem is different as we must determine to which customer locations finished goods should be transported to meet demand. 

We let $\mtc{Q}$ denote the set of locations wherein demand can occur. 
Hence, $\rv{\xi}$ is now an $\R^{|\mtc{F}|+|\mtc{Q}|}$ random vector.
We let $D_{qsd}$ represent the demand realization for finished goods in location $q \in \mtc{Q}$ given distribution $d \in \mtc{D}$ and scenario $s \in \mtc{S}_{d}$. We also let $H_{fq}, f \in \mtc{F}, q \in \mtc{Q}$ represent the per-unit cost associated with transporting units from facility $f$ to demand location $q.$ 

We let the continuous decision variable $w_{fq}$ denote the amount of finished goods transported from facility $f \in \mtc{F}$ to demand location $q \in \mtc{Q}$ and $o_f$ the amount of inventory left at facility $f$. The second stage problem is as follows.
\begin{subequations}
    \label{eq:ppp2:sp}
\begin{align}
Q(x,\xi_{sd})=\max~&\sum_{f\in\mathcal{F}}\sum_{q\in\mtc{Q}}(P-H_{fq})w_{fq}+\sum_{f\in\mathcal{F}}Oo_{f}\\
\label{eq:ppp2:c5}  &\sum_{q\in\mtc{Q}}w_{fq}+o_{f}= Y_{fsd}x_{f} & \forall f\in\mathcal{F},\\
\label{eq:ppp2:c6}  &\sum_{f\in\mathcal{F}}w_{fq}\leq D_{qsd}&\forall  q\in\mtc{Q},\\
\label{eq:ppp2:r1} &  o_{f} \geq 0 & \forall f\in\mathcal{F}, \\
\label{eq:ppp2:r2} & w_{fq} \geq 0 & \forall f \in \mathcal{F},q\in\mtc{Q}.
\end{align}
\end{subequations}

\subsection{Production Planning Problem with Batches and Per-vehicle Transportation Costs - Variant 3}
\label{subsec:pp_pv_trans_costs}
Finally, we consider a variant of the problem that differs in two ways from those previously presented. First, production decisions in the first stage are made in terms of batches as opposed to continuous amounts. Second, while we again presume demand for finished goods occurs in different locations, we now presume that transportation costs are paid on a per-vehicle basis. 

We presume a set of potential production amounts, $\mtc{B}_{f}$ for facility $f \in \mtc{F},$ at most one of which must be chosen. Associated with batch $b \in \mtc{B}_{f}$ is a batch size $Q_{bf}$. We let the binary variable $x_{fb} \in \{0,1\}$ denote whether batch size $b \in B_{f}$ is chosen for facility $f.$ As in the previous variants, we assume probability distributions are determined by production levels.  %v =
\begin{subequations}\label{eq:ppp3}
\begin{align}
\label{eq:ppp3:obj} \min~&\sum_{f\in\mathcal{F}}\sum_{b \in \mtc{B}_{f}}C_{f}Q_{bf}x_{fb}- \sum_{d\in\mathcal{D}}\delta_{d}\Bigg[\sum_{s\in\mathcal{S}_{d}}\pi_{sd}Q(x,\xi_{sd})\Bigg]  \\
\label{eq:ppp3:c0}\text{s.t.} & \sum_{b \in \mtc{B}_{f}}x_{fb} \leq 1 & \forall f \in \mathcal{F}, \\
\label{eq:ppp3:c1} &   \sum_{l\in\mathcal{L}_{f}}y_{fl} =1 & \forall f\in\mathcal{F},  \\
\label{eq:ppp3:c2} &   \sum_{l\in\mathcal{L}_{f}}L_{fl}y_{fl} \leq \sum_{b \in \mtc{B}_{f}}Q_{bf}x_{fb}\leq \sum_{l\in\mathcal{L}_{f}}U_{fl}y_{fl} & \forall f\in\mathcal{F} \\ 
\label{eq:ppp3:c3}    & \sum_{d\in\mathcal{D}}\delta_{p} = 1 &  \\
\label{eq:ppp3:c4} &\sum_{f\in\mathcal{F}}y_{f,l(f,d)}\geq |\mathcal{F}|\delta_{d} &\forall d\in \mathcal{D} \\
\label{eq:ppp3:r1} & x_{fb} \in \{0,1\}  &\forall f\in\mathcal{F}, b \in B_{f}\\ 
\label{eq:ppp3:r2} & y_{fl} \in \{0,1\} &\forall f\in\mathcal{F}, l \in L_{f} \\
\label{eq:ppp3:r3} & \delta_{d} \in \{0,1\} &\forall d \in \mathcal{D} 
\end{align}
\end{subequations}
Let $K$ be the capacity of the vehicles and $G_{fq}$ the cost of a vehicle traveling from facility $f \in \mtc{F}$ to customer location $q \in \mtc{Q}$. Thus, we define integer decision variables $v_{fq}$ that model the number of vehicles that travel from facility $f \in \mtc{F}$ to location $q \in \mtc{Q}$. The second stage problem is as follows.
\begin{subequations}\label{eq:ppp3:sp}
\begin{flalign}
Q(x,\xi_{sd}) = \max~&\sum_{f\in\mathcal{F}}\left(\sum_{q\in\mtc{Q}}(Pw_{fq} - G_{fq}v_{fq}) +Oo_{f}\right) & \\
\label{eq:ppp3:sp:c0}\text{s.t.~} &\sum_{q\in\mtc{Q}}w_{fq}+o_{f}= Y_{fsd}\sum_{b \in \mtc{B}_{f}}Q_{bf}x_{fb} & \forall f\in\mathcal{F}, \\
\label{eq:ppp3:sp:c6}  &\sum_{f\in\mathcal{F}}w_{fq}\leq D_{qsd}&\forall  q\in\mtc{Q},\\
\label{eq:ppp3:sp:c7} & w_{fq} \leq Kv_{fq} & \forall f \in \mtc{F}, q \in \mtc{Q}, \\ 
\label{eq:ppp3:sp:r1} &  o_{f} \geq 0 & \forall f\in\mathcal{F}, \\
\label{eq:ppp3:sp:r2} & w_{fq} \geq 0 & \forall f \in \mathcal{F}, q \in \mathcal{Q},\\
\label{eq:ppp3:sp:r3} & v_{fq} \in \mathbb{Z}^+ & \forall f \in \mathcal{F}, q \in \mathcal{Q}.
\end{flalign}
\end{subequations}

\section{Computational study}
\label{sec:comp_study}

To assess the effectiveness of the proposed, general, method we use it to solve instances of each variant of the production planning problem presented in \Cref{sec:comp_setting}. We compare the performance of the method with an off-the-shelf optimization solver solving instances of the extensive linearized form of the three variants of the problem. The linearized versions are obtained by linearizing the bilinear terms in the objective functions using \cite{Mcc76,AlkF83}. In Appendix \ref{app:lin} we present the linearized extensive form of the first variant. Linearized extensive forms of the other variants are similar.

We begin this section presenting the instances used in the experiments that form the basis of our computational analyses and the computational environment in which the experiments were run. We finish the section by discussing and analyzing the results of those experiments.

\subsection{Instances}
\label{subsec:instances}

All instances underlying our computational study are randomly generated, albeit with a process inspired by a motivating industrial application. Thus, we will next describe the parameter values used when generating those instances. We note that the parameter values defining an instance of the second or third variants include the parameter values required by the first. We also note that all instances are available upon request. 

We recall that there are two primary parameters, $\vert \mathcal{F} \vert$ and $\vert \mathcal{L}_{f} \vert$, that (partially) define a set of instances for all three variants. Regarding the first parameter, for the first variant we consider instances that contain between two and seven facilities. For the second and third variants we consider instances that contain between two and five facilities. Regarding the second parameter, each facility has the same number of production levels, which ranges from two to five. For the second and third variants there is another parameter, which is the number of customer locations, $\vert \mathcal{Q} \vert.$ In all instances of variants two or three we considered five such locations. Finally, for the third variant another parameter is the number of production batches, $\vert \mathcal{B} \vert.$ In all instances of variant three we considered five batches. 

Regarding the economics underlying the instances, the cost $C_{f}$ of manufacturing in facility $f \in \mathcal{F}$ was randomly drawn from the range $[60,80].$ The price $P$ at which product is sold was randomly drawn from the range $[125,185]$. As such, the expected gross margin on products sold is 54\%.  Finally, the revenue $O$ recovered from left-over inventory was randomly drawn from the range $[15,40].$ Note this ensures that in each instance there is a loss on products manufactured but not sold at full price. Turning to transportation costs, we randomly generated distances in miles between between each facility and each customer location based on a uniform distribution over the interval [75,300].  Per-unit costs, $H_{fq}, f \in \mathcal{F}, q \in \mathcal{Q},$ were computed based on those distances and a cost of \$0.10 per mile. Per-vehicle costs, $G_{fq}, f \in \mathcal{F}, q \in \mathcal{Q},$ were computed similarly albeit based on a per-mile cost of \$1.55.

We next discuss the scenario generation process. We first recall that associated with each instance of any of the three variants there are $\vert \mathcal{D} \vert =  \Pi_{f\in\mtc{F}}\vert  \mathcal{L}_f \vert$ distributions. The number of distributions in our instances for variant 1  ranged from $2^2=4$ to $5^7=78,125$. For variants 2 and 3, as those instances consist of at most five facilities, the number of distributions ranged from $2^2=4$ to $5^5=3,125.$ 

For the uncertainty in production yields, we consider a single joint distribution for each combination of production levels at different facilities.
This joint distribution is composed of marginal normal distributions truncated at $0.25$ and $1.0$, with different parameters at different facilities as well as at different production levels within the same facility. Generally speaking, we considered individual yield distributions such that for a given mean, the larger the production level the smaller the standard deviation in production yield.
Demands were also presumed to follow a normal distribution. In the case of variants 2 and 3, in which demands occur at customer locations, we presumed the demand distributions at different customer locations were independent. However, the mean of the distribution varies across customer locations.

Scenarios were generated by random sampling from the given joint distribution. We considered $S =$ $5$, $10$, $15$, $\ldots$, $45$, $50$ scenarios per distribution. As a result, an instance consists of $S\Pi_{f\in\mtc{F}}\vert\mathcal{L}_f \vert$  scenarios. %
\Cref{tab:size} reports statistics on the size of the extensive linearized formulations of the three variants for the smallest and largest number of distributions considered.

\begin{table}\caption{Average sizes of the linearized extensive forms by variant for the smallest and largest number of distributions.}\label{tab:size}

    \centering
    \begin{tabular}{cccccccc}
    \toprule
             &  &\multicolumn{2}{c}{\# variables}  & \multicolumn{2}{c}{\# int. variables}   & \multicolumn{2}{c}{\# constraints}  \\
        Variant & $\vert\mtc{D}\vert$ & min & max & min & max & min & max \\
        \midrule
         V1& 4 &  410 &	40,010 &	8 &	8 &	813 	&80,013\\
         V1& 78,125 & 78,90,667 &	781,328,167 &	78,160 &	78,160 &	15,703,154 	&1,562,578,154 \\
         V2& 4 & 2,414 &	240,014 &	12 &	12 	&4,313 &	430,013\\
         V2& 3,125 & 4,703,155 &	468,765,655 &	15,650 &	15,650 &	7,815,646 	&781,253,146 \\
         V3& 4 & 4,422 &	440,022 &	2,022 &	200,022 &	8,313 &	830,013\\
         V3& 3,125& 8,609,425 &	859,390,675 &	3,921,925 	&390,640,675 &	15,628,146 &	1,562,503,146\\
         \bottomrule
    \end{tabular}
\end{table}

For all three variants we define a class of instances by the triplet $(\vert \mathcal{F} \vert, \vert \mathcal{L} \vert, S)$, wherein $\vert \mathcal{L}_{f} \vert = \vert \mathcal{L} \vert \;\; \forall f \in \mathcal{F}.$ We note that for each variant we generate five instances for each class. In total, we consider 880 instances of the first variant, 800 instances of the second, and 800 instances of the third.

\subsection{Computational environment}
\label{subsec:comp_environment}
All code supporting both the proposed method presented in \Cref{sec:benders} as well as solving the extensive linearized versions is written in Java.
All optimization models solved by that code were done so using CPLEX 20.1 \cite{Cplex}.
The proposed method was implemented in a branch-and-benders-cut-type fashion. Namely, the relaxed master problem is solved by a branch-and-bound-based method and inequalities generated at nodes of the tree at which integer solutions are found.
We also note that when solving instances with integer second stage problems (e.g. \eqref{eq:sp2s:integer}) we also solve the linear programming relaxation of the subproblem in order to generate duality-based cuts \eqref{eq:continuous:oc}.  
We note that the three problem variants considered have relatively complete recourse and thus feasibility cuts need not be generated. 

All tests were performed on computing nodes equipped with 40 CPUs and 188 GB memory.
However, no parallelization techniques were used when executing our L-Shaped algorithm for any of the three variants.
Each of those experiments was run using a single thread. However, when solving extensive linerized forms, CPLEX was configured to use its default deterministic parallel search strategy for the branch-and-bound method, which used up to $32$ threads.
In all experiments we used an optimality tolerance of $0.0001$ and a time limit of $1,800$ seconds.

\subsection{Performance of the L-shaped method} \label{subsec:perf_benders}

We compare the performance of two methods. The first is the proposed extension of the L-shaped method, which we label \textbf{LS} in the subsequent tables. We note that we initially report results based on experiments in which the distribution-independent valid inequalities presented in \Cref{subsec:dist_ind_opt} were not used. We end this section with a study of their impact.  The second method involves solving the extensive linearized forms with CPLEX, which we label \textbf{CPLEX} in the subsequent tables. To compare the computational effectiveness of the two methods we focus on four statistics. The first is the percentage of instances that method could solve to the desired tolerance and within the given time limit. The second is the average time, in seconds, that method required to solve those instances. The third is the average optimality gap reported by the method at termination for the instances it could not solve, but for which it completed without exceeding the memory limit. The fourth is the percentage of instances for which the method terminated because it exceeded the memory alotted.  We report these statistics, by variant, in \Cref{tbl:comp_bench}.

\begin{table}[htp]
  \centering
  \caption{Comparison of methods on three variants. Optimality gaps are computed as $100\cdot (|\texttt{best\_primal}-\texttt{best\_dual}|)/|\texttt{best\_dual}|$. }
  \label{tbl:comp_bench}  
  \begin{tabular}{|c|c|c|c|c|c|}
    \toprule
    \multirow{3}{*}{Variant 1} & \% of instances  	& Average time  & \% unsolved 	& \% unsolved & Avg. gap  \\
                               &	solved		& to solve (sec.) &	exceeded	& did not exceed & unsolved \\
                               &				&			  & memory	& memory		   &			\\
    \midrule
    \textbf{LS} & 100.00\% &  0.04  & N/A & N/A & N/A \\
    \textbf{CPLEX} & 90.57\% &  12.48  & N/A & N/A & 9.43\% \\
    \midrule
    \multirow{3}{*}{Variant 2} & \% of instances  	& Average time  & \% unsolved 	& \% unsolved & Avg. gap  \\
                               &	solved		& to solve (sec.) &	exceeded	& did not exceed & unsolved \\
                               &				&			  & memory	& memory		   &			\\\midrule
    \textbf{LS} & 96.25\% 		&  95.86  		& N/A		& 3.75\%		   & 95.62\%  \\
    \textbf{CPLEX} & 31.75\% 	&  462.56  	& 33.00\%		& 35.25\%		   &	98.59\%  \\
    \midrule
    \multirow{3}{*}{Variant 3} & \% of instances  	& Average time  & \% unsolved 	& \% unsolved & Avg. gap  \\
                               &	solved		& to solve (sec.) &	exceeded	& did not exceed & unsolved \\
                               &				&			  & memory	& memory		   &			\\\midrule
    \textbf{LS} & 100.00\% 		&  355.81  	& N/A 		& N/A		   & N/A  \\
    \textbf{CPLEX} & 49.88\% 	&  332.67  	& 21.50\%		& 28.63\% 	   & 30.69\% \\
    \bottomrule
  \end{tabular}
\end{table}

We see that while solving the deterministic equivalent with CPLEX was fairly effective for the first variant, it solved less than half the instances of the second and third  variants. Further, CPLEX exceeded the alotted memory when solving many instances. Finally, when CPLEX did not exceed the alotted memory, it reported optimality gaps over 30\% for the instances it did not solve. 

 The proposed method was able to solve all instances of variants 1 and 3 and 96.25\% of instances of variant 2. For the instances the method could solve, it required, on average, less than six minutes to do so. We further note that all the instances of variant 2 that the proposed method did not solve were instances involving 3,125 distributions.  Unlike CPLEX, the proposed method rarely required more than $1$ megabyte of memory during its execution.  We conclude from these results that the proposed method is superior to the benchmark, at least for this class of problem. Thus, we next focus on analyzing the performance of the proposed method.

We next analyze three more statistics regarding the performance of the method. The first is the number of nodes in the branch-and-bound tree the method explored. The second is the number of optimality cuts the method generated. The third is the time the method spent generating cuts. We report averages of these statistics, by variant, in  \Cref{tbl:perf_variant}. We note we only report these statistics for instances the method was able to solve to optimality. 
\begin{table}[htp]
\centering
\caption{Performance statistics by variant.}
\label{tbl:perf_variant}
\begin{tabular}{cccc}
\toprule
Variant & Branch and bound nodes & Optimality cuts & Time generating cuts (sec.)\\
\midrule
 1 &  0.12  & 6.22 & 0.02 \\
 2 &  446.38  & 254.91 & 95.49 \\
 3 &  3.66 & 4.17 & 355.79 \\
\bottomrule
\end{tabular}
\end{table}

Comparing \Cref{tbl:comp_bench} and \Cref{tbl:perf_variant} we observe that for all three variants the method spent the vast majority of its time generating cuts.
The method was often able to converge at the root node, and generate few cuts, when solving instances of the first variant.
However, the more complicated second stage subproblem associated with the second variant required it to perform a more in-depth search of the branch-and-bound tree as well as generate far more cuts during that search.
Comparing variants two and three we observe that the method did not need to generate many optimality cuts to converge. However, the time required to generate that smaller set of cuts was much greater. 

Given that the proposed method solved instances of the first variant nearly instantaneously, we next report on the time it required to solve the other two variants in more detail. We first note that the method was able to solve the majority (73.12\%) of instances of the second variant within 50 seconds. While instances of the third variant took more time, the method was able to solve 81.23\% within ten minutes. On the other hand, the method needed more than 1,000 seconds to solve 9.14\% of the instances of the third variant.
 
We next seek to understand what instance parameters impact the solution time of the proposed method when applied to instances of variant two or three.  To that effect, we first consider the average solve time, averaged over instances that share the same number of distributions ($\vert \mtc{D} \vert$, \Cref{tbl:solve_num_d_1}). 
We see that while the solution time for instances of variant two increases as the number of distributions increases, the same trend is not present for instances of variant three. Given that few branch and bound nodes were explored when solving instances of variant three  (\Cref{tbl:perf_variant}), this is likely due to few master problem solutions needing to be evaluated before a provably optimal solution was found.

\begin{table}[htp]
\centering
\small
\caption{Average time to solve by $\vert \mtc{D} \vert$.}
\label{tbl:solve_num_d_1}
\begin{subtable}{.8\linewidth}
\caption{$\vert \mtc{D} \vert = 4-64$.}
\begin{tabular}{|c|c|c|c|c|c|c|c|c|}
\hline
 & \multicolumn{8}{c|}{$\vert \mtc{D} \vert$} \\
Variant & 4 & 8 & 9 & 16 & 25 & 27 & 32 & 64 \\
\hline
2 & 2.45 & 4.02 & 4.11 & 7.21 & 9.71 & 10.34 & 15.43 & 23.11 \\
3 & 267.24 & 342.81 & 507.83 & 410.81 & 540.42 & 325.39 & 370.93 & 340.16 \\
\hline
\end{tabular}
\end{subtable}
\begin{subtable}{.8\linewidth}
\caption{$\vert \mtc{D} \vert = 81-3125$.}
\begin{tabular}{|c|c|c|c|c|c|c|c|}
\hline
 & \multicolumn{7}{c|}{$\vert \mtc{D} \vert$} \\
Variant &  81 & 125 & 243 & 256 & 625 & 1024 & 3125 \\
\hline
2 &  31.20 & 41.59 & 95.31 & 93.97 & 272.15 & 589.66 & 672.06 \\
3 &  277.60 & 303.63 & 380.61 & 292.54 & 272.31 & 341.59 & 308.33 \\
\hline
\end{tabular}
\end{subtable}
\end{table}

To further understand the behavior of the method when applied to variant two we present in \Cref{tbl:num_cuts_by_d} the average number of optimality cuts generated when solving instances involving the same number of distributions, $\vert \mtc{D} \vert.$
We see that the method generates more cuts for instances that involve more distributions. However, the method regularly generates less than two optimality cuts per distribution. This suggests that the method is often able to generate a single optimality cut that provides sufficient evidence that first stage variable values from a given subset $\mtc{X}_d$ will lead to worse objective values.

This pattern appears to be broken for the instances with $3125$ distributions.
Recall that the results in \Cref{tbl:num_cuts_by_d} are obtained using instances of variant two.
In \Cref{tbl:comp_bench} it is evident $3.75\%$ of these instances were not solved to optimality.
These are precisely some of the instances with $3125$ distributions.
For these instances the method did not manage to add all the cuts necessary for convergence within the time limit.

\begin{table}[htp]
\centering
\small
\caption{Average number of optimality cuts in variant two by $\vert \mtc{D} \vert$.}
\label{tbl:num_cuts_by_d}
\begin{subtable}{.8\linewidth}
\caption{$\vert \mtc{D} \vert = 4-64$}
\begin{tabular}{|c|c|c|c|c|c|c|c|}
\hline
 \multicolumn{8}{|c|}{$\vert \mtc{D} \vert$} \\
  4 & 8 & 9 & 16 & 25 & 27 & 32 & 64 \\
\hline
 6.74 & 11.52 & 11.82 & 20.98 & 30.40 & 32.00 & 41.16 & 72.44 \\
\hline
\end{tabular}
\end{subtable}
\begin{subtable}{.8\linewidth}
\caption{$\vert \mtc{D} \vert = 81-3125$}
\begin{tabular}{|c|c|c|c|c|c|c|}
\hline
  \multicolumn{7}{|c|}{$\vert \mtc{D} \vert$} \\
  81 & 125 & 243 & 256 & 625 & 1024 & 3125 \\
\hline
89.14 & 134.06 & 254.08 & 267.98 & 636.74 & 1039.58 & 2060.34 \\
\hline
\end{tabular}
\end{subtable}
\end{table}

Next, we report in \Cref{tbl:solve_num_s} the performance of the method on instances with the same number of scenarios per distribution ($S$).
\begin{table}[htp]
\centering
\small
\caption{Average time to solve by $S$.}
\label{tbl:solve_num_s}
\begin{tabular}{|c|c|c|c|c|c|c|c|c|c|c|}
\hline
 & \multicolumn{10}{c|}{$S$} \\
Variant &  5 & 10 & 15 & 20 & 25 & 30 & 35 & 40 & 45 & 50 \\
\hline
2 &  8.03 & 31.00 & 76.82 & 113.38 & 55.11 & 84.47 & 100.66 & 132.75 & 163.32 & 203.38 \\
3 &  8.52 & 39.54 & 84.94 & 142.76 & 197.78 & 292.01 & 463.82 & 608.11 & 879.34 & 841.32 \\
\hline
\end{tabular}
\end{table}
We see that for both variants the solution time increases as the number of scenarios per distribution increases.

We next turn our attention to the impact of the distribution-independent inequalities presented in \Cref{subsec:dist_ind_opt}. To do so, we executed the proposed L-Shaped method on variants two and three again, albeit with the additional valid inequalities \eqref{eq:ext:vi2} that are based on \Cref{prop:min_ev}. We note that given that the subproblems are maximization problems, and concave, the inequality was derived by replacing $\min_{d\in\mtc{D}}\Ex_{\pr{P}_d}[\xi]$ with $\max_{d\in\mtc{D}}\Ex_{\pr{P}_d}[\xi]$. We further note that for variant three \Cref{prop:min_ev} was applied to the LP relaxation of the integer second stage problem. Also, for both variants, distribution-independent inequalities were added only at the root node of the branch-and-bound tree.

We first note that the use of the distribution-independent inequalities did not lead to a significant improvement in the performance of the method when applied to instances of variant two.
Considering variant three, we recall that the method executed without valid inequalities \eqref{eq:ext:vi2} solved every instance, see \Cref{tbl:comp_bench}.
We observe that the method exhibited the same performance with the use of these additional inequalities.
Further, we report in \Cref{tab:max_ev_cuts} the average time the method required with and without these inequalities, for instances with large numbers of distributions.
We see that, for instances involving large number of distributions, the use of these inequalities can significantly decrease the time required by the method to converge to an optimal solution.

\begin{table}[htp]

\caption{
Average time for \textbf{LS} to solve the largest instances of variant three with and without valid inequality \eqref{eq:ext:vi2}.
}\label{tab:max_ev_cuts}
  \centering  
    \begin{tabular}{rrrr}
    \toprule
  $\lvert\mtc{D}\rvert$ & With Dist-ind. VI (sec.) &  Without Dist-ind. VI (sec.) & $\Delta$ (\%) \\
\midrule
 243 & 346.03  & 380.61 & -9.99 \\
 256  & 255.06  & 292.53 & -14.68 \\
 625  & 241.21  & 272.30 & -12.89 \\
 1024  & 309.43  & 341.58 & -10.39 \\
 3125  & 263.13  & 308.32 & -17.17 \\
\bottomrule
\end{tabular}
\end{table}

\section{Conclusions and future work}
\label{sec:conclusion_future}

In this paper, we considered two-stage stochastic programs in which uncertainty is endogeneous.
Namely, the probablity distributions of the data of the second stage problem depend on first stage decisions.
We proposed an adaptation of the L-Shaped method for solving two-stage stochastic programs to this class of problem.
Critical to this adaptation is the concept of \textit{distribution-specific} cuts.
We presented a mechanism for adding such cuts to a master problem solved in the context of executing an L-Shaped method.
We also demonstrated how such cuts can be derived for two important classes of sub-problem.
The first is sub-problems that can be formulated as a linear program.
The second is those that can be formulated as a mixed integer linear program.
We also proposed optimality cuts that are distribution-independent. 
We adapted the general method to three variants of a production planning problem under endogenous and exogenous uncertainty.
With an extensive computational study involving over $2,000$ instances we demonstrated the superior computational performance of the proposed adaptation of the L-Shaped method. 

We see multiple avenues for future work. First, we recall that the validity of the cuts presented for the case of second stage subproblems that are mixed integer linear programs requires an additional assumption regarding the domains of first stage decision variables. Thus, deriving cuts that do not require this assumption would leave a method that is applicable to a broad class of problems. The mechanism we provide for creating distribution-specific cuts can be easily adapted to cuts applicable to different types of first stage variables. Second, the method involves generating distribution-specific inequalities. While we also propose inequalities that are distribution-independent, one can envision enhancing the method with techniques that adapt the inequalities generated for one distribution to be valid for another.  Third, there has been a tremendous amount of research recently on techniques for speeding up the L-Shaped method when solving stochastic programs in which uncertainty is exogenous. Adapting such techniques to the proposed method in this paper is a promising line of research. One example of such a technique is a multi-cut version of the proposed method in which cuts are generated for each distribution and/or each of its scenarios.  Fourth, there are many practical applications in which uncertainty is endogenous. Thus, another line of future work is to adapt the proposed method to problems other than the production planning problem considered in this paper.  Tailoring the method to specific problems may enable further enhancements as problem structure may be exploited. Fifth, preprocessing techniques for identifying when a partition of the feasible region can not contain an optimal solution will likely enable the method to converge much more quickly. The avenues just outlined are primarily methodological. We also believe there is value in extending the well-known concepts of \textit{Value of the Stochastic Solution} and \textit{Expected Value of Perfect Information} to problems with decision-dependent uncertainty.

\bibliographystyle{abbrv}
\bibliography{preprint.bib}
\section{Acknowledgments}
This research was partly supported by the \textit{Novo Nordisk Fonden} grant NNF24OC0089770.

\begin{appendices}
%\refalias{section}{appendix}
\section{Example reformulation with binary variables}\label{sec:app:example}

Let $\mtc{X}=\{0,1\}^{n_1}$ with $n_1=5$.
Let $n^{(1)}=2$, $n^{(2)}=4$, $n^{(3)}=5$, so that each $x$ vector is split into $T=3$ segments.
Assume there are exactly two exhaustive conditions, say $k_1$ and $k_2$, on segment $t$ given by, respectively, 
$\sum_{i=n^{(t-1)+1}}^{n^{(t)}}x_i<1$ and $\sum_{i=n^{(t-1)+1}}^{n^{(t)}}x_i\geq 1$.
Hence, $\mtc{K}_t=\{k_1,k_2\}$ for $t=1,\ldots,T$.
We obtain $\lvert\mtc{D}\rvert=\Pi_{t=1}^3\lvert\mtc{K}_t\rvert=8$.
Let the function $k(t,d)$ be specified as follows: 
$k(1,d_1)=k_1$, $k(2,d_1)=k_1$, $k(3,d_1)=k_1$,
$k(1,d_2)=k_1$, $k(2,d_2)=k_1$, $k(3,d_2)=k_2$,
$k(1,d_3)=k_1$, $k(2,d_3)=k_2$, $k(3,d_3)=k_1$,
$k(1,d_4)=k_1$, $k(2,d_4)=k_2$, $k(3,d_4)=k_2$,
$k(1,d_5)=k_2$, $k(2,d_5)=k_1$, $k(3,d_5)=k_1$,
$k(1,d_6)=k_2$, $k(2,d_6)=k_1$, $k(3,d_6)=k_2$,
$k(1,d_7)=k_2$, $k(2,d_7)=k_2$, $k(3,d_7)=k_1$,
$k(1,d_8)=k_2$, $k(2,d_8)=k_2$, $k(3,d_8)=k_2$.
The sets $\mtc{X}_d$ are then specified as follows:
$\mtc{X}_{d_1}=\{(0,0,0,0,0)\}$, $\mtc{X}_{d_2}=\{(0,0,0,0,1)\}$
$$\mtc{X}_{d_3}=\{(0,0,0,1,0),(0,0,1,0,0),(0,0,1,1,0)\}$$
$$\mtc{X}_{d_4}=\{(0,0,0,1,1),(0,0,1,0,1),(0,0,1,1,1)\}$$
$$\mtc{X}_{d_5}=\{(1,0,0,0,0),(0,1,0,0,0),(1,1,0,0,0)\}$$
$$\mtc{X}_{d_6}=\{(1,0,0,0,1),(0,1,0,0,1),(1,1,0,0,1)\}$$
$$\mtc{X}_{d_7}=\{x\in\{0,1\}^5\vert \sum_{i=1}^{2}x_i\geq 1, \sum_{i=3}^{4}x_i\geq 1, x_5 = 0\}$$
$$\mtc{X}_{d_8}=\{x\in\{0,1\}^5\vert \sum_{i=1}^{2}x_i\geq 1, \sum_{i=3}^{4}x_i\geq 1, x_5 = 1\}$$
The reformulation proposed in \Cref{sec:benders:mpformulations} entails creating six binary variables $v_{tk}$ instead of eight binary variables $\delta_d$.

\section{Extensive linearized form of problem variant $1$}\label{app:lin}

The extensive linearized form of \eqref{eq:ppp1} is as follows.
\begin{subequations}
\begin{align}
\max~&-\sum_{f\in\mathcal{F}}C_{f}x_{f}+\sum_{d\in\mathcal{D}}\sum_{s\in\mathcal{S}_{d}}\pi_{sd}\bigg(P \mu_{sd}+O\rho_{sd}\bigg)\\
\text{s.t. }~&   \sum_{l\in\mathcal{L}_{f}}y_{fl} =1&\forall f\in\mathcal{F}  \\
&   \sum_{l\in\mathcal{L}_{f}}L_{fl}y_{fl} \leq x_{f}\leq \sum_{l\in\mathcal{L}_{f}}U_{fl}y_{fl}&\forall f\in\mathcal{F}\\ 
  & \sum_{d\in\mathcal{D}}\delta_{d} = 1 & , \\
 &\sum_{f\in\mathcal{F}}y_{f,l(f,d)}\geq |\mathcal{F}|\delta_{d} &\forall  d\in \mathcal{D}\\
&w_{sd} + o_{sd}= \sum_{f\in\mathcal{F}}Y_{fsd}x_{f} & \forall  d\in\mathcal{D}, s\in\mathcal{S}_{d}, \\
 &w_{sd}\leq D_{sd}&\forall d\in\mathcal{D}, s\in\mathcal{S}_{d}\\
 & x_{f} \geq 0  &\forall f\in\mathcal{F}, \\ 
 & y_{fl} \in \{0,1\} &\forall f\in\mathcal{F}\, l \in L_{pf}, \\
 & \delta_{d} \in \{0,1\} &\forall d \in \mathcal{D}, \\
 & w_{sd} \geq 0, o_{sd} \geq 0 & \forall d \in \mathcal{D}, s \in \mathcal{S}_{d}. \\
&\mu_{sd}\leq w_{sd}&\forall  d\in\mathcal{D}, s\in\mathcal{S}_{d}\\   
 &\mu_{sd}\leq D_{sd}\delta_{d}&\forall  d\in\mathcal{D}, s\in\mathcal{S}_{d}\\
 &\mu_{sd}\geq w_{sd} - D_{sd}(1-\delta_{d})&\forall d\in\mathcal{D}, s\in\mathcal{S}_{d}\\
 &\rho_{sd}\leq o_{sd}&\forall  d\in\mathcal{D}, s\in\mathcal{S}_{d}\\   
 &\rho_{sd}\leq N_{sd}\delta_{d}&\forall  d\in\mathcal{D}, s\in\mathcal{S}_{d}\\
&\rho_{sd}\geq o_{sd} - N_{sd}(1-\delta_{d})&\forall d\in\mathcal{D}, s\in\mathcal{S}_{d}.
\end{align}
\end{subequations}
Here, variables $\mu_{sd}$ and $\rho_{sd}$ linearize the products $\delta_{sd}w_{sd}$ and $\delta_{sd}o_{sd}$, respectively.
Constants $M_{sd}$ and $N_{sd}$ represent upper bounds on sales and oversupply, respectively. 

\end{appendices}

\end{document}